\documentclass[11pt]{amsart}
\textheight = 8.5in \textwidth = 6.4in \oddsidemargin = 0.0in
\evensidemargin = 0.0in

\input{diagrams}

\usepackage{bibentry}
\usepackage{amsmath}
\usepackage{amsthm}
\usepackage{amssymb}
\usepackage{amsfonts}
\usepackage{amsxtra}
\usepackage{amscd}
\usepackage{epsfig}
\usepackage{verbatim}
\usepackage{latexsym,amstext,epsfig}

\usepackage[all, knot]{xy}
\xyoption{arc}

\newsymbol\pp 1275

\newcommand{\Hom}{\operatorname{Hom}}
\newcommand{\End}{\operatorname{End}}
\newcommand{\Ext}{\operatorname{Ext}}
\newcommand{\ext}{\operatorname{ext}}
\newcommand{\Rep}{\operatorname{Rep}}

\newcommand{\SI}{\operatorname{SI}}
\newcommand{\SL}{\operatorname{SL}}
\newcommand{\GL}{\operatorname{GL}}

\newcommand{\ZZ}{\mathbb Z}

\newcommand{\RR}{\mathbb R}

\newcommand{\coker}{\operatorname{coker}}

\newcommand{\supp}{\operatorname{supp}}

\newtheorem{theorem}{Theorem}[section]
\newtheorem{proposition}[theorem]{Proposition}

\newtheorem{lemma}[theorem]{Lemma}

\theoremstyle{definition}
\newtheorem{definition}[theorem]{Definition}
\newtheorem{remark}[theorem]{Remark}

\newtheorem{example}[theorem]{Example}

\title[]{Orbit semigroups and the representation type of quivers}

\author{Calin Chindris}
\address{University of Iowa, Department of Mathematics, Iowa City, IA 52242, USA}
\email[Calin Chindris]{calin-chindris@uiowa.edu}

\markboth{b}{b}
\date{25 August, 2007; Revised: \today}

\begin{document}
\bibliographystyle{plain}
\subjclass[2000]{Primary 16G20; Secondary 05E15} \keywords{Orbit
semigroups, semi-invariants, quivers}
\begin{abstract}
We show that a finite, connected quiver $Q$ without oriented
cycles is a Dynkin or Euclidean quiver if and only if all orbit
semigroups of representations of $Q$ are saturated.
\end{abstract}
\maketitle

\section{Introduction}
The representation type of a quiver reflects the complexity of its
indecomposable representations. There are three distinct classes:
\emph{finite type}, \emph{tame}, and \emph{wild} quivers. A quiver
is said to be of finite type if there are only finitely many
indecomposable representations. We say that a quiver is tame if it
is not of finite representation type, and in each dimension all
but finitely many indecomposable representations come in a finite
number of $1$-parameter families. Finally, we call a quiver wild
if its representation theory is at least as complicated as that of
a free algebra in two (non-commuting) variables. For precise
definitions, we refer to \cite[Ch. 4]{Benson}.

Gabriel's classical result \cite{Ga} identifies the connected
quivers of finite type as being those whose underlying graphs are
the Dynkin diagrams of types $\mathbb{A}$, $\mathbb{D}$, or
$\mathbb{E}$. Later on, Nazarova \cite{Naz} and Donovan-Freislich
\cite{DF} found the tame, connected quivers. Their underlying
graphs are the Euclidean diagrams of types
$\widetilde{\mathbb{A}}$, $\widetilde{\mathbb{D}}$, or
$\widetilde{\mathbb{E}}$. The remaining connected quivers are the wild ones.

It is an important and interesting task to find geometric
characterizations of the representation type of a quiver (or more
generally, of finite dimensional algebras). In \cite[Theorem
1]{SW1}, Skowro{\'n}ski and Weyman showed that a finite, connected
quiver is a Dynkin or Euclidean quiver if and only if the various
algebras of semi-invariants are always complete intersections. In
this paper, we provide a different characterization of the
representation type in terms of saturated orbit semigroups.

Let $Q$ be a quiver and $\beta$ a dimension vector. Following
\cite[Definition 2.1]{BeHa}, we define the orbit semigroup of a
representation $W \in \Rep(Q,\beta)$ to be
$$
S(W)_Q=\{\sigma \in \ZZ^{Q_0} \mid \exists f \in
\SI(Q,\beta)_{\sigma} \text{~such that~} f(W)\neq 0\}.
$$
The cones generated by the semigroups $S(W)_Q$ play a fundamental
role in the construction of the GIT-fans for quivers (see
\cite{CC5} and the reference therein). Furthermore, Derksen-Weyman
saturation theorem \cite{DW1} for semi-invariants tells us that
$S(W)_Q$ are saturated for \emph{generic} representations. However,
there are quiver representations whose orbit semigroups are not
saturated. We refer to Section \ref{intro-sec} for background
material on quiver invariant theory. Throughout this paper, we work over an algebraically closed field
$k$ of characteristic zero.

Now, we are ready to state our main result:

\begin{theorem}\label{main-thm} Let $Q$ be a finite, connected quiver without oriented cycles.
The following are equivalent:
\begin{enumerate}
\renewcommand{\theenumi}{\arabic{enumi}}
\item $Q$ is a Dynkin or Euclidean quiver;

\item for every dimension vector $\beta$, the semigroup $S(W)_Q$ is
saturated for every $W \in \Rep(Q,\beta)$.
\end{enumerate}
\end{theorem}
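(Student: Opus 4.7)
The plan for $(1) \Rightarrow (2)$ is to handle the Dynkin and Euclidean cases separately. For a Dynkin quiver $Q$, every representation $W$ decomposes uniquely into indecomposables by Gabriel's theorem, and the ring $\SI(Q,\beta)$ is generated by Schofield semi-invariants $c^V$ satisfying $c^V(W)\neq 0$ if and only if $\Hom(V,W)=0$. Hence membership of $\sigma$ in $S(W)_Q$ is witnessed by representations $V$ with $\Hom(V,W)=0$ whose weights $\langle \dim V, \cdot \rangle$ sum to $\sigma$. Saturation then reduces to the observation that $\Hom(V,W)=0$ is invariant under replacing $V$ by $V^{\oplus n}$, together with the fact that for Dynkin quivers every positive root is a real Schur root and the weight semigroup is tightly controlled. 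For Euclidean quivers, one writes $W = W_p \oplus W_r \oplus W_q$ (preprojective, regular, preinjective parts) and uses the tubular structure of the regular component, together with Auslander--Reiten duality, to reduce the saturation claim to a check on representations concentrated in a single tube; the key point is that isotropic imaginary roots, although they have $n$-fold generic representations for every $n$, still admit semi-invariants in each effective weight rather than only in the multiples.

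For $(2) \Rightarrow (1)$, I would argue the contrapositive: if $Q$ is wild, then some orbit semigroup fails to be saturated. Every finite connected wild quiver contains, up to reflection equivalence, a minimal wild configuration such as a generalized Kronecker quiver with at least three arrows, or one of the small wild star-quivers going beyond the extended Dynkin list. For such a minimal wild quiver one can exhibit a dimension vector $\beta$ together with a representation $W \in \Rep(Q,\beta)$ and a weight $\sigma$ for which every homogeneous semi-invariant in $\SI(Q,\beta)_\sigma$ is forced to vanish on $W$ by a rank-deficiency coming from some distinguished subrepresentation, whereas products of Schofield semi-invariants of total weight $n\sigma$ avoid this common vanishing. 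The counterexample on the minimal wild subquiver is then extended by zero to the ambient quiver; functoriality of $S(-)_Q$ under this extension preserves the non-saturation witness.

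The main obstacle is the $(2) \Rightarrow (1)$ direction, and within it the step of producing a uniform family of explicit representations with non-saturated orbit semigroups that covers every connected wild quiver. The difficulty is twofold: first, reducing the general wild $Q$ to a list of small wild subquivers while controlling how orbit semigroups transform under restriction, extension by zero, and reflection functors; second, writing down explicit weights $\sigma$ and representations $W$ for which the non-saturation is both concrete and traceable through the reduction. The Euclidean half of the forward direction is the subtler part of $(1) \Rightarrow (2)$, because the presence of infinitely many indecomposables and of isotropic roots forces one to argue via the tubular structure and the behavior of Schofield semi-invariants on regular components, rather than through any finite case analysis.
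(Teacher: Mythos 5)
Your high-level architecture (Dynkin/Euclidean for the forward implication, reduction to minimal wild subquivers for the converse) agrees with the paper, but both directions as you sketch them have real gaps, and the forward direction in particular misses the idea that makes the paper's proof short.

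For $(1)\Rightarrow(2)$, the paper does \emph{not} split into Dynkin and Euclidean cases, and does not invoke the tubular structure of the regular component at all. Instead it uses Kac's canonical decomposition uniformly: write $\alpha = \alpha_1 \oplus \cdots \oplus \alpha_l$ in canonical form; since $Q$ is Dynkin or Euclidean, every $\alpha_i$ is a real or isotropic Schur root, and then Schofield's Theorem~\ref{canonical-decomp-thm} gives the canonical decomposition of $n\alpha$ as $\alpha_1^{\oplus n} \oplus \cdots \oplus \alpha_l^{\oplus n}$. Upper semi-continuity of $\dim\Hom_Q(\cdot,W)$ and $\dim\Ext^1_Q(\cdot,W)$, combined with Proposition~\ref{orbit-semi-def-prop}, then lets one choose a witness $V\in\Rep(Q,n\alpha)$ with $V\perp W$ that has a direct summand of dimension $\alpha$; that summand witnesses $\sigma\in S(W)$. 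Your sketch never explains how to pass from an $n\alpha$-dimensional witness back to an $\alpha$-dimensional one: the remark that ``$\Hom(V,W)=0$ is invariant under replacing $V$ by $V^{\oplus n}$'' goes in the useless direction (from $\alpha$ up to $n\alpha$), and the correct condition is $V\perp W$, i.e.\ vanishing of both $\Hom$ and $\Ext^1$. This descent step is precisely what the canonical-decomposition argument supplies, and without it your Dynkin case is incomplete. Your Euclidean route via preprojective/regular/preinjective splitting, tubes, and AR duality is potentially feasible but would require a substantial case analysis that the paper's proof is specifically designed to avoid.

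For $(2)\Rightarrow(1)$, your plan is close in spirit, but the decisive ingredient is missing: an explicit representation with non-saturated orbit semigroup. The paper produces one on the three-arrow Kronecker quiver $\theta(3)$ with $\beta=(3,3)$, $W$ given by three linearly independent skew-symmetric $3\times 3$ matrices, and $\sigma=(1,-1)$: King's criterion shows $n\sigma\in S(W)$ for some $n\geq 1$, while every semi-invariant of weight $\sigma$ is a coefficient of $\det(t_1W(a)+t_2W(b)+t_3W(c))$, which vanishes identically because odd-size skew-symmetric matrices are singular. This example is the engine of the whole converse; your description of ``a rank-deficiency coming from some distinguished subrepresentation'' is too vague to reconstruct it. The paper then needs a precise list of seven target quivers (Proposition~\ref{7-quivers}), explicit quiver exceptional sequences embedding $\theta(3)$ (or intermediate quivers) into each, and the three compatibility results — Propositions~\ref{tool-reflection}, \ref{tool-shrink}, \ref{tool-exceptional} — showing that reflection functors, path shrinking, and exceptional sequences respect property $(S)$. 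Note that Lemma~\ref{sat-orbit-semigr-lemma}, which underlies the reflection-functor reduction, again relies on the canonical-decomposition descent argument, so that machinery is used in both directions. Finally, ``extension by zero'' is not among the paper's reduction tools; the relevant reductions are to (not-necessarily-full) subquivers via reflections and shrinking, and within a fixed quiver via exceptional subcategories.
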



To show that orbit semigroups for Dynkin or Euclidean quivers are
saturated, we use Derksen-Weyman spanning theorem and a theorem of
Schofield on Kac's canonical decomposition for dimension vectors.
This allows us to give a short, conceptual proof avoiding a
case-by-case analysis. To deal with wild quivers, we use
reflection functors, shrinking methods and exceptional sequences
to reduce the list of wild quivers to the generalized Kronecker
quiver with three arrows.

The layout of this paper is as follows. Background material on
quiver invariant theory is reviewed in Section \ref{intro-sec}. In
particular, we recall Derksen-Weyman spanning and saturation
theorems. In Section \ref{Kac-sec}, we first recall Schofield's
theorem on canonical decompositions and then we prove the
necessary part of our theorem. Reflection functors, the shrinking
method, and exceptional sequences are reviewed in Section
\ref{reduction-sec} where we show that these reduction methods
behave nicely with respect to saturated orbit semigroups. We
complete the proof of Theorem \ref{main-thm} in Section
\ref{wild-sec} by showing that for every wild quiver without
oriented cycles there is a representation whose orbit semigroup is
not saturated. 
The last section discusses the thin sincere case. In this case, we show that the
orbit semigroups are saturated.

\subsection*{Acknowledgement} I am grateful to Harm Derksen and
Jerzy Weyman for helpful discussions on the subject, especially
regarding some of the examples from  Section \ref{wild-sec}. I
would also like to thank Ben Howard for stimulating discussions on
his results from \cite{BenH}. Many thanks are also due to the referee for a very thorough report which helped improved the paper.

\section{Recollection from quiver invariant theory}
\label{intro-sec}

Let $Q=(Q_0,Q_1,t,h)$ be a finite quiver, where $Q_0$ is the set
of vertices, $Q_1$ is the set of arrows and $t,h:Q_1 \to Q_0$
assign to each arrow $a \in Q_1$ its tail \emph{ta} and head
\emph{ha}, respectively.

In this paper, we work over an algebraically closed field
$k$ of characteristic zero. A representation $V$ of $Q$ over $k$
is a collection $(V(x), V(a))_
{x\in Q_0, a \in Q_1}$ of finite dimensional $k$-vector spaces $V(x)$, $x\in Q_0$, and $k$-linear maps $V(a)\in \Hom_{k}(V(ta), V(ha))$, $a \in Q_1$. If $V$ is a representation of $Q$, we define its dimension vector
$\underline d_V$ by $\underline d_V(x)=\dim_{k} V(x)$ for every
$x\in Q_0$. Thus the dimension vectors of representations of $Q$
lie in $\Gamma=\ZZ^{Q_0}$, the set of all integer-valued functions
on $Q_0$.

Given two representations $V$ and $W$ of $Q$, we define a morphism
$\varphi:V \rightarrow W$ to be a collection of $k$-linear maps
$(\varphi(x))_{x \in Q_0}$ with $\varphi(x)\in \Hom_k(V(x), W(x))$, $x \in Q_0$, and
such that
$\varphi(ha)V(a)=W(a)\varphi(ta)$ for every arrow $a \in Q_1$. We denote by $\Hom_Q(V,W)$ the
$k$-vector space of all morphisms from $V$ to $W$. Let $W$ and $V$
be two representations of $Q.$ We say that $V$ is a
subrepresentation of $W$ if $V(x)$ is a subspace of $W(x)$ for all
vertices $x \in Q_0$ and $V(a)$ is the restriction of $W(a)$ to
$V(ta)$ for all arrows $a \in Q_1$. In this way, we obtain the
abelian category $\Rep(Q)$ of all quiver representations of $Q$.

A representation $W$ is said to be a \emph{Schur representation}
if $\End_{Q}(W) \cong k$. The dimension vector of a Schur
representation is called a \emph{Schur root}.

\textbf{From now on, we assume that our quivers are without
oriented cycles}. For two quiver representations $V$ and $W$,
consider Ringel's canonical exact sequence \cite{R} :
\begin{equation}\label{can-exact-seq}
0 \rightarrow \Hom_Q(V,W) \rightarrow \bigoplus_{x\in
Q_0}\Hom_{k}(V(x),W(x)){\buildrel
d^V_W\over\longrightarrow}\bigoplus_{a\in
Q_1}\Hom_{k}(V(ta),W(ha)),
\end{equation}
where $d^V_W((\varphi(x))_{x\in
Q_0})=(\varphi(ha)V(a)-W(a)\varphi(ta))_{a\in Q_1}$ and
$\Ext^1_{Q}(V,W)=\coker(d^V_W).$

If $\alpha,\beta$ are two elements of $\Gamma$, we define the
Euler inner product by
\begin{equation}\label{Euler-prod}
\langle\alpha,\beta \rangle_{Q} = \sum_{x \in Q_0}
\alpha(x)\beta(x)-\sum_{a \in Q_1} \alpha(ta)\beta(ha).
\end{equation}
(When no confusion arises, we drop the subscript $Q$.) It follows
from (\ref{can-exact-seq}) and (\ref{Euler-prod}) that
$$\langle\underline d_V, \underline d_W\rangle=
\dim_k\Hom_{Q}(V,W)-\dim_k\Ext^1_{Q}(V,W).$$

\subsection{Semi-invariants of quivers} For every vertex $x$, we denote by
$e_x$ the simple dimension vector corresponding to $x$, i.e.,
$e_x(y)=\delta_{x,y}, \forall y\in Q_0$, where $\delta_{x,y}$ is
the Kronecker symbol.

Let $\beta$ be a dimension vector of $Q$. The representation space
of $\beta-$dimensional representations of $Q$ is
$$\Rep(Q,\beta)=\bigoplus_{a\in Q_1}\Hom_{k}(k^{\beta(ta)}, k^{\beta(ha)}).$$
If $\GL(\beta)=\prod_{x\in Q_0}\GL(\beta(x))$ then $\GL(\beta)$
acts algebraically on $\Rep(Q,\beta)$ by simultaneous conjugation,
i.e., for $g=(g(x))_{x\in Q_0}\in \GL(\beta)$ and $W=(W(a))_{a
\in Q_1} \in \Rep(Q,\beta),$ we define $g \cdot W$ by
$$(g\cdot W)(a)=g(ha)W(a)g(ta)^{-1}\ \text{for each}\ a \in Q_1.$$ Hence, $\Rep(Q,\beta)$ is a rational representation of the
linearly reductive group $\GL(\beta)$ and the $\GL(\beta)-$orbits
in $\Rep(Q,\beta)$ are in one-to-one correspondence with the
isomorphism classes of $\beta-$dimensional representations of $Q.$
As $Q$ is a quiver without oriented cycles, one can show that
there is only one closed $\GL(\beta)-$orbit in $\Rep(Q,\beta)$ and
hence the invariant ring $\text{I}(Q,\beta)= k
[\Rep(Q,\beta)]^{\GL(\beta)}$ is exactly the base field $k$.

Now, consider the subgroup $\SL(\beta) \subseteq \GL(\beta)$
defined by
$$
\SL(\beta)=\prod_{x \in Q_0}\SL(\beta(x)).
$$

Although there are only constant $\GL(\beta)-$invariant polynomial
functions on $\Rep(Q,\beta)$, the action of $\SL(\beta)$ on
$\Rep(Q,\beta)$ provides us with a highly non-trivial ring of
semi-invariants. Note that any $\sigma \in \ZZ^{Q_0}$ defines a
rational character of $\GL(\beta)$ by
$$(g(x))_{x \in Q_0} \in \GL(\beta) \mapsto \prod_{x \in
Q_0}(\det g(x))^{\sigma(x)}.$$ In this way, we can identify
$\Gamma=\ZZ ^{Q_0}$ with the group $X^\star(\GL(\beta))$ of
rational characters of $\GL(\beta),$ assuming that $\beta$ is a
sincere dimension vector (i.e., $\beta(x)>0$ for all vertices $x
\in Q_0$). In general, we have only the epimorphism $\Gamma \to X^*(\GL(\beta))$, but we usually do not distinguish between $\sigma$ and its image in $X^*(\GL(\beta))$. We also refer to the rational characters of $\GL(\beta)$ as (integral) weights.

Let $\SI(Q,\beta)= k[\Rep(Q,\beta)]^{\SL(\beta)}$ be the ring of
semi-invariants. As $\SL(\beta)$ is the commutator subgroup of
$\GL(\beta)$ and $\GL(\beta)$ is linearly reductive, we have
$$\SI(Q,\beta)=\bigoplus_{\sigma
\in X^\star(\GL(\beta))}\SI(Q,\beta)_{\sigma},
$$
where $$\SI(Q,\beta)_{\sigma}=\lbrace f \in k[\Rep(Q,\beta)] \mid
g\cdot f= \sigma(g)f \text{~for all~}g \in \GL(\beta)\rbrace$$ is
the space of semi-invariants of weight $\sigma$.

If $\alpha \in \ZZ^{Q_0}$, we define the weight $\sigma=\langle
\alpha,\cdot \rangle$ by
$$\sigma(x)=\langle \alpha,e_x \rangle, \forall x\in
Q_0.$$ Conversely, it is easy to see that for any weight $\sigma
\in \ZZ^{Q_0}$ there is a unique $\alpha \in \ZZ^{Q_0}$ (not
necessarily a dimension vector) such that $\sigma=\langle
\alpha,\cdot \rangle$. Similarly, one can define $\mu = \langle
\cdot,\alpha \rangle$.

In \cite{S2}, Schofield constructed semi-invariants of quivers with
remarkable properties. Let $\alpha,\beta$ be two dimension vectors
such that $\langle \alpha,\beta \rangle=0$. Following \cite{S2},
we define
$$
c:\Rep(Q,\alpha)\times \Rep(Q,\beta)\rightarrow k
$$ by $c(V,W)=\det(d^V_W)$. Note that $d^V_W$ from (\ref{can-exact-seq})
is indeed a square matrix since $\langle \alpha,\beta \rangle=0$.
Fix $(V,W)\in \Rep(Q,\alpha) \times \Rep(Q,\beta)$. Then it is
easy to see that $c^V=c(V,\cdot):\Rep(Q,\beta)\rightarrow k$ is a
semi-invariant of weight $\langle \alpha, \cdot \rangle$ and
$c_W=c(\cdot,W):\Rep(Q,\alpha)\rightarrow k$ is a semi-invariant
of weight $-\langle \cdot, \beta \rangle$.

\begin{remark} We should point out that if $V$ is an $\alpha$-dimensional
representation not necessarily in $\Rep(Q,\alpha)$, the semi-invariant $c^{V}$ is well-defined on $\Rep(Q,\beta)$ up to a non-zero scalar. 
\end{remark}

Given two representations $V$ and $W$, we say that $V$ is
orthogonal to $W$, and write $V \perp W$, if
$\Ext^1_{Q}(V,W)=\Hom_Q(V,W)=0$.

\begin{remark} \label{non-zero-semi-rmk}
Using the exact sequence (\ref{can-exact-seq}) for two representations $V$ and $W$, we deduce that
$c^V(W)\neq 0$ if and only if $V \perp W$. 
\end{remark}

\subsection{The Spanning and Saturation Theorems}

A very important property of the Schofield semi-invariants is that
each weight space of semi-invariants is spanned by such
semi-invariants. This is a fundamental result proved by Derksen
and Weyman \cite{DW1} (see also \cite{SVB}).

\begin{theorem}[The Spanning Theorem] \label{spanning-thm}\cite{DW1}
Let $\beta$ be a sincere dimension vector of $Q$ and let $\sigma=\langle \alpha, \cdot \rangle$ be a weight with $\alpha \in \ZZ^{Q_0}$. If the weight space of semi-invariants $\SI(Q,\beta)_{\sigma}$ is non-zero then $\alpha$ is a dimension vector, and moreover, $\SI(Q,\beta)_{\sigma}$ is spanned by the semi-invariants $c^V$ with $V \in \Rep(Q,\alpha)$.
\end{theorem}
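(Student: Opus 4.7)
The plan is to identify $\SI(Q,\beta)_\sigma$ as an isotypic component of $k[\Rep(Q,\beta)]$ via Cauchy's formula, and then to exhibit Schofield's determinantal construction as a realization of each summand.

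First, I decompose $k[\Rep(Q,\beta)]$ as a $\GL(\beta)$-module. Since $\Rep(Q,\beta) = \bigoplus_{a \in Q_1} \Hom_k(k^{\beta(ta)}, k^{\beta(ha)})$, Cauchy's formula gives
$$k[\Rep(Q,\beta)] \cong \bigoplus_{(\lambda^{(a)})} \bigotimes_{a \in Q_1} S_{\lambda^{(a)}}\bigl(k^{\beta(ta)}\bigr)^{*} \otimes S_{\lambda^{(a)}}\bigl(k^{\beta(ha)}\bigr),$$
summed over tuples of partitions indexed by $Q_1$. Collecting Schur functors at each vertex and applying the Littlewood--Richardson rule, $\SI(Q,\beta)_\sigma$ is the sum of those summands for which, at every vertex $x$, the combined $\GL(\beta(x))$-representation contains the determinant character $\det^{\sigma(x)}$. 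The rectangular shape forced at each vertex requires the existence of non-negative integers $\alpha(x)$ with $\sigma(x) = \alpha(x) - \sum_{a:\,ha=x}\alpha(ta) = \langle \alpha, e_x\rangle$, so $\alpha$ is automatically a dimension vector, and the partitions $\lambda^{(a)}$ must fit inside rectangles with sides determined by $\alpha$ and $\beta$.

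Second, I match the combinatorics with the Schofield construction. Since the diagonal center of $\GL(\beta)$ acts trivially on $\Rep(Q,\beta)$, the existence of a nonzero semi-invariant of weight $\sigma$ forces $\langle \alpha, \beta\rangle = 0$, so that $d^V_W$ is square and $c^V(W) = \det(d^V_W)$ is a well-defined semi-invariant. The Leibniz expansion of this determinant groups naturally into products of minors across the arrow components, supported precisely on the rectangular Schur pieces identified above. As $V$ varies over $\Rep(Q,\alpha)$, these expressions produce highest-weight vectors in each contributing Cauchy summand.

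The main obstacle is the spanning assertion itself: one must certify that the map $V \mapsto c^V$ hits every such summand, not just some. I would handle this by interpreting the pairing $(V,W) \mapsto c(V,W)$ as realizing the Cauchy duality between the $\GL(\alpha)$- and $\GL(\beta)$-actions on matching Schur pieces, so that nondegeneracy on one side forces surjectivity onto the weight-$\sigma$ semi-invariants on the other. Combined with a dimension count against the Cauchy decomposition from the first step, this yields the full spanning statement.
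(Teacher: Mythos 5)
The paper treats this theorem as a cited result from Derksen--Weyman \cite{DW1} (see also \cite{SVB}); it gives no proof, so there is no internal argument to compare against. Evaluated on its own terms, your plan --- Cauchy decomposition of $k[\Rep(Q,\beta)]$, Littlewood--Richardson identification of the weight-$\sigma$ piece, and matching against the Schofield determinants $c^V$ --- is indeed the broad shape of the Derksen--Weyman proof. However, two steps are wrong or unresolved.

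First, the appeal to a ``rectangular shape forced at each vertex'' is not accurate. In the Cauchy decomposition, arbitrary partitions $\lambda^{(a)}$ appear subject only to length constraints, and the condition that a summand contribute to weight $\sigma$ is a genuine Littlewood--Richardson condition at each vertex, not a rectangularity condition. The conclusion that $\alpha$ must be a dimension vector when $\beta$ is sincere is true and can be extracted from the Cauchy decomposition (at a source $x$, the $\GL(\beta(x))$-module $k[\Rep(Q,\beta)]$ is polynomial, so only $\det^m$ with $m\geq 0$ appear, forcing $\alpha(x)=\sigma(x)\geq 0$; one then propagates through the acyclic quiver), but not via the argument you give. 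Second, and more serious, the spanning step --- which you yourself flag as ``the main obstacle'' --- is not actually addressed. Saying the pairing $c(V,W)$ ``realizes Cauchy duality'' so that ``nondegeneracy on one side forces surjectivity on the other'' restates the conclusion rather than proving it: nondegeneracy of $c(\cdot,W)$ shows that $c^V$ is nonzero for generic $V$, but does not show that the span of all $c^V$ reaches every contributing Cauchy summand. Derksen--Weyman close exactly this gap with a nontrivial combinatorial argument comparing two Littlewood--Richardson-coefficient formulas (for $\dim\SI(Q,\beta)_\sigma$ and for the span of determinantal semi-invariants), while Schofield--van den Bergh take a different, more structural route. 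Without one of those, or a substitute, your sketch leaves the spanning claim unproved.
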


Recall that for a given representation $W \in \Rep(Q,\beta)$, its orbit semigroup is
$$
S(W)_Q=\{\sigma \in \ZZ^{Q_0} \mid \exists f \in
\SI(Q,\beta)_{\sigma} \text{~such that~} f(W)\neq 0\}.
$$
(When no confusion arises, we drop the subscript $Q$.)

Now, let $Q'$ be the full subquiver of $Q$ whose vertex set is $\supp(W)=\{x \in Q_0 \mid
W(x)\neq \{0\} \}$ and denote by $W'$ the restriction of $W$ to $Q'$. If $\sigma \in \ZZ^{Q_0}$ is a weight, we
denote by $\sigma'$ its restriction to $Q'$. Also, any
representation $V'$ of $Q'$ is viewed as a representation of $Q$
in a natural way. Note that $S(W)_Q$ is just the inverse image in $\ZZ^{Q_0}$ of $S(W')_{Q'}$.

From Remark \ref{non-zero-semi-rmk} and Theorem \ref{spanning-thm}, we deduce:

\begin{proposition}\label{orbit-semi-def-prop} Let $W \in \Rep(Q,\beta)$ be a representation and
$\sigma=\langle \alpha, \cdot \rangle$ a weight with $\alpha \in
\ZZ_{\geq 0}^{Q_0}$. Then,

$$\sigma \in S(W) \Longleftrightarrow \exists V \in \Rep(Q,\alpha) \text{~such that~} V \perp W.$$

Consequently,

\begin{equation} \label{def-orbitsemi}
S(W)=\{\sigma \in \ZZ^{Q_0} \mid \sigma'=\langle \alpha', \cdot
\rangle_{Q'} \text{~with~} \alpha' \in \ZZ^{Q'_0}_{\geq 0} \text{~and~} \exists V' \in \Rep(Q',\alpha')
\text{~such that~} V' \perp W \}.
\end{equation}
\end{proposition}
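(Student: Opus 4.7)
The plan is to combine Remark \ref{non-zero-semi-rmk} with the Spanning Theorem to establish the first equivalence, and then derive the displayed description (\ref{def-orbitsemi}) from this equivalence together with the support-quiver observation recorded just before the proposition.

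For the backward direction of the first equivalence, suppose $V \in \Rep(Q,\alpha)$ satisfies $V \perp W$. Then $\Hom_Q(V,W) = \Ext^1_Q(V,W) = 0$, so the Euler identity after (\ref{Euler-prod}) forces $\langle \alpha,\beta \rangle = 0$; consequently Schofield's semi-invariant $c^V$ is a well-defined element of $\SI(Q,\beta)_{\sigma}$ with $\sigma = \langle \alpha,\cdot \rangle$, and by Remark \ref{non-zero-semi-rmk} we have $c^V(W) \neq 0$, so $\sigma \in S(W)$. For the forward direction, suppose $\sigma \in S(W)$, i.e.\ there exists $f \in \SI(Q,\beta)_{\sigma}$ with $f(W)\neq 0$; in particular the weight space is nonzero. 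After restricting to the support subquiver $Q'$ so that $\beta' = \beta|_{Q'_0}$ becomes sincere, and identifying $\SI(Q,\beta)_{\sigma} = \SI(Q',\beta')_{\sigma'}$, the Spanning Theorem \ref{spanning-thm} expresses $f$ as a linear combination of Schofield semi-invariants $c^{V_i}$. Since $f(W) \neq 0$, at least one $c^{V_i}(W) \neq 0$, and Remark \ref{non-zero-semi-rmk} then gives $V_i \perp W$.

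For the consequence, apply the first equivalence to the sincere pair $(Q',W')$: $\sigma' \in S(W')_{Q'}$ iff $\sigma' = \langle \alpha',\cdot\rangle_{Q'}$ with $\alpha' \in \ZZ_{\geq 0}^{Q'_0}$ and there exists $V' \in \Rep(Q',\alpha')$ perpendicular to $W'$. Combined with the observation just before the proposition that $S(W)_Q$ is the preimage of $S(W')_{Q'}$ under restriction $\ZZ^{Q_0} \to \ZZ^{Q'_0}$, this gives exactly the right-hand side of (\ref{def-orbitsemi}) — upon noting that perpendicularity of $V'$ and $W'$ in $Q'$ coincides with perpendicularity of $V'$ (extended by zero) and $W$ in $Q$, because $W$ vanishes on $Q_0 \setminus Q'_0$ and all terms of the Ringel sequence (\ref{can-exact-seq}) involving such vertices collapse.

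The main technical point, and essentially the only place where care is required, is the translation between $Q$ and its support subquiver $Q'$: once the weight space identification $\SI(Q,\beta)_{\sigma} = \SI(Q',\beta')_{\sigma'}$ and the perpendicularity transfer are in place, everything else is a direct chain of implications from Remark \ref{non-zero-semi-rmk} and Theorem \ref{spanning-thm}.
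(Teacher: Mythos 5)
Your proof is correct and takes exactly the route the paper implies when it writes ``From Remark \ref{non-zero-semi-rmk} and Theorem \ref{spanning-thm}, we deduce'' without supplying details: Remark \ref{non-zero-semi-rmk} handles the backward direction (together with the Euler identity to see that $\langle\alpha,\beta\rangle=0$, so $c^V$ is defined and has weight $\sigma$), the Spanning Theorem handles the forward direction for sincere $\beta$, and the support-quiver observation stated immediately before the proposition transfers everything to (\ref{def-orbitsemi}) in the non-sincere case. Your check that orthogonality of $V'$ (extended by zero) to $W$ in $Q$ agrees with orthogonality of $V'$ to $W'$ in $Q'$ is the one small verification the paper leaves implicit, and you do it correctly.

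One point worth making explicit, since you pass through $Q'$ inside the forward direction of the \emph{first} equivalence: when $\beta$ is not sincere, the Spanning Theorem applied over $Q'$ produces representations $V_i$ of dimension $\alpha'$, where $\alpha'$ is the unique vector on $Q'_0$ with $\sigma'=\langle\alpha',\cdot\rangle_{Q'}$. Extending $V_i$ by zero yields a representation of $Q$ of dimension $(\alpha',0)$, which need not coincide with the given $\alpha\in\ZZ_{\geq 0}^{Q_0}$ if $\alpha$ is nonzero off $\supp(W)$ (the identity $\langle\alpha,\cdot\rangle_Q|_{Q'_0}=\langle\alpha|_{Q'_0},\cdot\rangle_{Q'}$ fails in general when there are arrows from $Q_0\setminus Q'_0$ into $Q'_0$ at whose tails $\alpha$ is nonzero). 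As written, your argument therefore only produces a perpendicular representation of dimension $(\alpha',0)$, not necessarily of the prescribed dimension $\alpha$. This is harmless for the paper, since the first equivalence is only ever invoked after reducing to sincere $\beta$ (where $\alpha=\alpha'$) and the general case is always routed through (\ref{def-orbitsemi}); but you should either state the first equivalence for sincere $\beta$ and then deduce (\ref{def-orbitsemi}), or add a sentence justifying that the given $\alpha$ can be taken.
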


At this point, we can use $(\ref{def-orbitsemi})$ to define orbit
semigroups for arbitrary representations. In this way, the orbit
semigroup $S(W)$ is independent of the choice of the isomorphism
class of $W$. We should mention that Proposition
\ref{orbit-semi-def-prop} together with Theorem
\ref{canonical-decomp-thm} below plays a crucial role in our
study.

Using the Spanning Theorem and Schofield's theory of general
representations \cite{S1}, Derksen and Weyman proved the following
remarkable result:

\begin{theorem}[The Saturation Theorem] \cite{DW1} \label{DW-sat}
Let $Q$ be a quiver and $\beta$ a dimension vector. Consider the
semigroup of integral effective weights:
$$
\Sigma(Q,\beta)=\{\sigma \in \ZZ^{Q_0} \mid \SI(Q,\beta)_{\sigma}
\neq 0 \}.
$$
Then, $\Sigma(Q,\beta)$ is saturated, i.e., for any positive
integer $n$ and $\sigma \in \ZZ^{Q_0}$,
$$
n\sigma \in \Sigma(Q,\beta) \Longleftrightarrow \sigma \in
\Sigma(Q,\beta).
$$

\end{theorem}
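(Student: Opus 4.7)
The plan is to use the Spanning Theorem (Theorem \ref{spanning-thm}) to convert the saturation statement into an existence claim about perpendicular pairs of representations, and then to use Schofield's theory of general representations \cite{S1} to check that this existence is insensitive to scaling the dimension vector.

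First, I would reformulate both sides of the target equivalence. Writing $\sigma=\langle \alpha,\cdot \rangle$ for the unique $\alpha \in \ZZ^{Q_0}$ corresponding to $\sigma$, so that $n\sigma=\langle n\alpha,\cdot \rangle$, Theorem \ref{spanning-thm} applied to the non-zero weight space $\SI(Q,\beta)_{n\sigma}$ forces $n\alpha$, and hence $\alpha$, to lie in $\ZZ_{\geq 0}^{Q_0}$, and it also tells us that $\SI(Q,\beta)_{n\sigma}$ is spanned by the Schofield semi-invariants $c^V$ with $V\in \Rep(Q,n\alpha)$. Combined with Remark \ref{non-zero-semi-rmk}, the hypothesis $n\sigma \in \Sigma(Q,\beta)$ becomes: there exist $V\in \Rep(Q,n\alpha)$ and $W\in \Rep(Q,\beta)$ with $V \perp W$. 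In the same way, the desired conclusion $\sigma \in \Sigma(Q,\beta)$ is equivalent to the existence of $V'\in \Rep(Q,\alpha)$ and $W'\in \Rep(Q,\beta)$ with $V' \perp W'$.

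Second, I would pass to generic values. Since $\dim \Hom_Q$ and $\dim \Ext^1_Q$ are upper semicontinuous on the product $\Rep(Q,\gamma) \times \Rep(Q,\delta)$, the existence of a perpendicular pair is equivalent to the vanishing of the generic values $\hom(\gamma,\delta)=\ext(\gamma,\delta)=0$. The weight condition gives $\langle n\alpha,\beta \rangle = n\langle \alpha,\beta \rangle = 0$, and since $\hom - \ext = \langle \cdot,\cdot \rangle$, these two vanishings are in fact equivalent in our setting. The theorem therefore reduces to the implication
\[
\ext(n\alpha,\beta)=0 \;\Longrightarrow\; \ext(\alpha,\beta)=0.
\]

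Third, I would invoke Schofield's theorem on Kac's canonical decomposition. If $\alpha=m_1\alpha_1\oplus \cdots \oplus m_r\alpha_r$ is the canonical decomposition of $\alpha$ into Schur roots, then scaling all multiplicities by $n$ preserves the pairwise vanishing $\ext(\alpha_i,\alpha_j)=0$ (for $i \neq j$) that characterizes canonical decompositions, so $n\alpha=(nm_1)\alpha_1\oplus \cdots \oplus (nm_r)\alpha_r$ is again in canonical form. Because a general representation of dimension $\alpha$ splits according to the canonical decomposition, Schofield's formula for generic $\Ext^1$ yields $\ext(\alpha,\beta)=\sum_i \ext(m_i\,\alpha_i,\beta)$ and similarly $\ext(n\alpha,\beta)=\sum_i \ext(nm_i\,\alpha_i,\beta)$, so the vanishing of the latter forces $\ext(nm_i\,\alpha_i,\beta)=0$ for every $i$. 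The main obstacle I expect is the final step: showing that $\ext(\ell\alpha_i,\beta)=0$ if and only if $\ext(\alpha_i,\beta)=0$ for each Schur root $\alpha_i$ and each positive integer $\ell$. For real Schur roots this is immediate, as a general representation of dimension $\ell\alpha_i$ is a direct sum of $\ell$ copies of the unique Schurian indecomposable of dimension $\alpha_i$; for isotropic and imaginary Schur roots one needs Schofield's finer analysis of how general representations of multiples of a Schur root decompose, and this is the technical heart of \cite{DW1}. This decoupling of multiplicities from the underlying Schur roots is precisely what makes saturation hold.
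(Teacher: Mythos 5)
First, note that the paper itself offers no proof of Theorem \ref{DW-sat}: it is imported verbatim from \cite{DW1}, with only the remark that Derksen and Weyman prove it by combining the Spanning Theorem with Schofield's theory of general representations \cite{S1}. So the comparison is with that (cited) argument. Your opening reductions are fine: restricting to the support of $\beta$ (the Spanning Theorem as stated needs $\beta$ sincere), writing $\sigma=\langle\alpha,\cdot\rangle$, and using Theorem \ref{spanning-thm} plus Remark \ref{non-zero-semi-rmk} and semicontinuity, the theorem does reduce to the implication $\ext(n\alpha,\beta)=0\Rightarrow\ext(\alpha,\beta)=0$ (given $\langle\alpha,\beta\rangle=0$). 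The problem is that your canonical-decomposition step does not close this implication; it merely reproduces it. Two issues: (i) your claimed characterization of canonical decompositions is wrong for non-isotropic imaginary summands, since $\ext(\alpha_i,\alpha_i)>0$ there, so ``pairwise $\ext$-vanishing after multiplying multiplicities by $n$'' does not show $n\alpha$ decomposes as you write; the correct statement is Theorem \ref{canonical-decomp-thm}, under which $nm_i\alpha_i$ occurs as a \emph{single} Schur root. (ii) Consequently, after decomposing you are left with exactly the statement you started from, namely $\ext(\ell\gamma,\beta)=0\Rightarrow\ext(\gamma,\beta)=0$ for a (non-isotropic imaginary) Schur root $\gamma$, and you explicitly defer this, calling it ``the technical heart of \cite{DW1}.'' That deferred step \emph{is} the theorem; as written, the proposal proves saturation only in the case where all canonical summands are real or isotropic (e.g.\ the Dynkin/Euclidean situation of Proposition \ref{tamequivers-prop}), not in general.

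The missing ingredient, and the route Derksen--Weyman actually take, is Schofield's computation of generic $\Ext$ via general subrepresentations: $\ext(\alpha,\beta)=\max\{-\langle\alpha,\beta/\beta'\rangle\ :\ \beta'\hookrightarrow\beta\}$, the maximum over dimension vectors $\beta'$ of subrepresentations of a general representation of dimension $\beta$. Since the set of such $\beta'$ depends only on $\beta$, this gives at once $\ext(n\alpha,\beta)=n\,\ext(\alpha,\beta)$, i.e.\ the homogeneity you need, with no canonical decomposition at all; equivalently, it identifies
$$
\Sigma(Q,\beta)=\{\sigma=\langle\alpha,\cdot\rangle \mid \alpha\in\ZZ^{Q_0}_{\geq 0},\ \sigma(\beta)=0,\ \sigma(\beta')\leq 0 \text{ for all } \beta'\hookrightarrow\beta\},
$$
the lattice points of a rational polyhedral cone cut out by homogeneous linear conditions, from which saturation is immediate. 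If you want to keep your outline, you must either quote this result of \cite{S1} (or an equivalent statement such as $\ext(\alpha,\beta)=0\Leftrightarrow\langle\alpha,\beta'\rangle\leq\langle\alpha,\beta\rangle$ for all $\beta'\hookrightarrow\beta$) to handle the imaginary Schur roots, or prove that step directly --- but then the detour through Kac's canonical decomposition becomes unnecessary.
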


\begin{remark} Let $\{ f_1, \dots, f_m \}$ be a generating system
of semi-invariants for $\SI(Q,\beta)$. Then, every representation
$W \in \Rep(Q,\beta)$ with $f_i(W) \neq 0, \forall 1 \leq i \leq
m$ has the property that $S(W)=\Sigma(Q,\beta)$. This shows that
orbit semigroups are saturated for generic representations.
\end{remark}

\begin{remark}
It is worth pointing out that for star quivers, Theorem
\ref{DW-sat} implies the Saturation Conjecture for
Littlewood-Richardson coefficients (for more details, see
\cite{DW1} and \cite{DW2}).
\end{remark}

\section{Kac's canonical decomposition}\label{Kac-sec}


One of the main tools that we use in this paper is Kac's canonical
decomposition of dimension vectors. Let $Q$ be a quiver and
$\alpha$ a dimension vector. Following \cite{Kac2}, we say that
$$
\alpha=\alpha_1 \oplus \dots \oplus \alpha_l
$$
is the canonical decomposition of $\alpha$ if there is a non-empty
open subset $\mathcal{U} \subseteq \Rep(Q,\alpha)$ such that every
$V \in \mathcal{U}$ decomposes as a direct sum of indecomposables
of dimension vectors $\alpha_1, \dots, \alpha_l$. It was proved by Kac
that the dimension vectors occurring in the canonical
decomposition of $\alpha$ must be Schur roots.

Recall that a root of $Q$ is just the dimension vector of an
indecomposable representation of $Q$. We say that a root $\alpha$
is \emph{real} if $\langle \alpha, \alpha \rangle=1$. If $\langle
\alpha, \alpha \rangle=0$, $\alpha$ is said to be an
\emph{isotropic} root. Finally, we say that $\alpha$ is a
\emph{non-isotropic imaginary} root if $\langle \alpha, \alpha
\rangle<0$.

It is important to know how to obtain the canonical decomposition
of a multiple of $\alpha$ from that of $\alpha$. Schofield's
theorem gives an answer to this question:

\begin{theorem} \cite[Theorem 3.8]{S1} \label{canonical-decomp-thm}
Let $\alpha=\alpha_1 \oplus \dots \oplus \alpha_l$ be the
canonical decomposition of $\alpha$ and let $m \geq 1$. Then, the
canonical decomposition of $m\alpha$ is
$$m\alpha=[m\alpha_1]\oplus
\dots \oplus [m\alpha_l],$$ where
$$[m\alpha_i]=\begin{cases}
\alpha_i^{\oplus m}:=\alpha_i \oplus \dots \oplus \alpha_i &
\text{if $\alpha_i$ is a real or
isotropic Schur root},\\
m\alpha_i & \text{if $\alpha_i$ is a non-isotropic imaginary Schur
root.}
\end{cases}
$$
\end{theorem}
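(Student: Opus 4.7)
The plan is to apply Schofield's characterization of canonical decompositions: a formal sum $\beta = \beta_1 \oplus \dots \oplus \beta_t$ (with repetitions allowed) of dimension vectors is \emph{the} canonical decomposition of $\beta$ if and only if every $\beta_i$ is a Schur root and the generic $\ext^1(\beta_i, \beta_j)$ vanishes for all $i \neq j$. With this criterion in hand, proving Theorem \ref{canonical-decomp-thm} reduces to checking that the proposed decomposition $m\alpha = [m\alpha_1] \oplus \dots \oplus [m\alpha_l]$ satisfies both conditions.

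First I would check that each summand is a Schur root. When $\alpha_i$ is a real or isotropic Schur root, the position-$i$ contribution $[m\alpha_i] = \alpha_i^{\oplus m}$ is simply $m$ copies of $\alpha_i$, each of which is Schur by hypothesis. When $\alpha_i$ is a non-isotropic imaginary Schur root, the position-$i$ contribution is the single dimension vector $m\alpha_i$, and I would invoke Kac's theorem on the fundamental region to conclude that $m\alpha_i$ remains a Schur root (equivalently, a generic $m\alpha_i$-dimensional representation is indecomposable with endomorphism ring $k$).

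Next I would verify the $\ext^1$-vanishing between distinct summands of the new decomposition. For indices $i \neq j$, the original canonical decomposition of $\alpha$ gives $\ext^1(\alpha_i, \alpha_j) = 0$, and by additivity of $\Ext^1_Q$ in each variable this extends to vanishing between any pair of summands drawn from positions $i$ and $j$ (including the case where one or both summands are inflated to $m\alpha_i$ or $m\alpha_j$ in the non-isotropic imaginary case). In the real or isotropic case, one also needs $\ext^1(\alpha_i, \alpha_i) = 0$ between distinct copies; this follows from $\langle \alpha_i, \alpha_i \rangle \in \{0, 1\}$ together with Schurness, since in the real case the unique indecomposable has $\End_Q = k$ and hence trivially $\ext^1 = 0$, while in the isotropic case a generic pair of non-isomorphic Schur representations of dimension $\alpha_i$ has $\Hom_Q = 0$, forcing $\ext^1$ to vanish as well via the Euler form.

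The main obstacle I foresee is the step that $m\alpha_i$ is a Schur root when $\alpha_i$ is a non-isotropic imaginary Schur root: this does not follow formally from the canonical decomposition hypothesis and requires the deeper part of Kac's theory showing that positive integer multiples of imaginary Schur roots in the fundamental region remain Schur roots (with a generic indecomposable representation). Once that input is available, Schofield's $\ext^1$-criterion together with the routine additivity arguments above identifies the claimed decomposition as canonical, completing the proof.
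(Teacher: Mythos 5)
The paper does not prove this statement: it is quoted verbatim as \cite[Theorem 3.8]{S1} and used as a black box, so there is no internal proof to compare against. Your outline does essentially follow the strategy Schofield uses in the cited paper (reduce to the $\ext$-vanishing characterization of canonical decompositions, then verify Schurness and pairwise $\ext$-vanishing for the proposed list), so as a sketch it is pointed in the right direction.

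Two places need to be tightened. First, your appeal to ``Kac's theorem on the fundamental region'' to conclude that $m\alpha_i$ is a Schur root when $\alpha_i$ is a non-isotropic imaginary Schur root is not quite the right citation: non-isotropic imaginary Schur roots need not lie in the fundamental region, and the Schur property is not simply inherited by $W$-conjugates. The correct input is Schofield's own Theorem 3.7 (a companion to 3.8), which proves directly that positive multiples of non-isotropic imaginary Schur roots remain Schur; as you correctly flag, this is the genuinely hard step and it cannot be obtained for free from Kac's root-theoretic results. Second, in the isotropic case your argument that $\ext(\alpha_i,\alpha_i)=0$ rests on the assertion that generic $\Hom$ vanishes between two independent generic $\alpha_i$-representations; this is true but is itself a nontrivial consequence of Schofield's subrepresentation theory (a nonzero generic map would force a generic sub-dimension-vector contradicting Schurness and isotropy), and should be justified rather than waved through with ``any map between bricks of the same dimension is $0$ or iso,'' which is false in general. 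Also, ``additivity of $\Ext^1$'' should be \emph{sub}additivity of the generic $\ext$: $\ext(\beta+\gamma,\delta)\le\ext(\beta,\delta)+\ext(\gamma,\delta)$ via the direct-sum representative, which is what you actually need. With those corrections the sketch is sound and matches the structure of Schofield's original argument.
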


Now, we are ready to prove:

\begin{proposition} \label{tamequivers-prop} Let $Q$ be a Dynkin or Euclidean quiver. Then the orbit semigroup
$S(W)$ is saturated for every representation $W$.
\end{proposition}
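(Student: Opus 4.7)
The plan is to combine Proposition \ref{orbit-semi-def-prop} with Schofield's description of the canonical decomposition of a multiple (Theorem \ref{canonical-decomp-thm}), exploiting the fact that Dynkin and Euclidean quivers admit no non-isotropic imaginary Schur roots. For real or isotropic Schur roots, Theorem \ref{canonical-decomp-thm} gives $[n\alpha_i] = \alpha_i^{\oplus n}$, and it is precisely this splitting off of the multiplicity $n$ that will let me descend from a representation of dimension $n\alpha$ to one of dimension $\alpha$.

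First, I will reduce to the case where $W$ is sincere, using that $S(W)_Q$ is the pullback of $S(W')_{Q'}$ for the full subquiver $Q'$ on $\supp(W)$, and that any connected full subquiver of a Dynkin or Euclidean quiver is itself Dynkin or Euclidean. Now suppose $n\sigma \in S(W)$ with $n \geq 1$. Since $S(W) \subseteq \Sigma(Q,\beta)$, the Saturation Theorem \ref{DW-sat} gives $\sigma \in \Sigma(Q,\beta)$, and the Spanning Theorem \ref{spanning-thm} then provides a dimension vector $\alpha$ with $\sigma = \langle \alpha, \cdot \rangle$; consequently $n\sigma = \langle n\alpha, \cdot \rangle$. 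Proposition \ref{orbit-semi-def-prop} translates the hypothesis $n\sigma \in S(W)$ into the existence of some $V_0 \in \Rep(Q,n\alpha)$ with $V_0 \perp W$.

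The heart of the argument is then a general-position maneuver. The orthogonality condition $V \perp W$ is Zariski open in $V$, so the non-empty set $\{V \in \Rep(Q,n\alpha) \mid V \perp W\}$ contains the general representation of dimension $n\alpha$. Write $\alpha = \alpha_1 \oplus \dots \oplus \alpha_l$ for the canonical decomposition of $\alpha$. Since $Q$ is Dynkin or Euclidean, each $\alpha_i$ is real or isotropic, so Theorem \ref{canonical-decomp-thm} tells us that the general representation of dimension $n\alpha$ decomposes as a direct sum containing $n$ indecomposable summands of dimension $\alpha_i$ for each $i$. Picking one summand $V_i$ of dimension $\alpha_i$ for each $i$, additivity of $\Hom_Q(-,W)$ and $\Ext^1_Q(-,W)$ forces $V_i \perp W$, and then $V := V_1 \oplus \dots \oplus V_l$ lies in $\Rep(Q,\alpha)$ with $V \perp W$. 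Proposition \ref{orbit-semi-def-prop} now yields $\sigma \in S(W)$, as required.

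The essential use of the hypothesis --- and the natural place a proof attempt would fail in the wild case --- is the step that guarantees every component $\alpha_i$ of the canonical decomposition of $\alpha$ is a real or isotropic Schur root. A non-isotropic imaginary $\alpha_i$ would produce the indecomposable component $n\alpha_i$ rather than $\alpha_i^{\oplus n}$ in the canonical decomposition of $n\alpha$, so there would be no mechanism to peel off a summand of dimension exactly $\alpha_i$ from the general representation. Ruling this out is the well-known consequence of the Tits form of a Dynkin or Euclidean quiver being positive (semi-)definite.
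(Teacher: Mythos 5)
Your argument is correct and follows the paper's own proof essentially step for step: reduce to the sincere case, pass to $\alpha$ via the Spanning Theorem, use that the $\alpha_i$ in the canonical decomposition are real or isotropic so that Theorem \ref{canonical-decomp-thm} gives $n\alpha=\alpha_1^{\oplus n}\oplus\cdots\oplus\alpha_l^{\oplus n}$, and then intersect the (non-empty) open set $\{V\perp W\}$ with the open set of representations realizing this decomposition to peel off an $\alpha$-dimensional summand orthogonal to $W$. The only cosmetic difference is that you first invoke the Saturation Theorem to put $\sigma$ in $\Sigma(Q,\beta)$ before applying the Spanning Theorem, whereas the paper applies the Spanning Theorem directly to $n\sigma$; both routes give the same conclusion.
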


\begin{proof}

Assume that $Q$ is a Dynkin or Euclidean quiver and let $W \in
\Rep(Q,\beta)$. We can clearly assume that $\beta$ is a sincere
dimension vector. Let $\sigma=\langle \alpha, \cdot \rangle \in
\ZZ^{Q_0}$ be a weight with $\alpha \in \ZZ^{Q_0}$. Assume
$n\sigma \in S(W)$ for some $n \geq 1$. From Theorem \ref{spanning-thm}, it follows that $\alpha$ is a dimension
vector. 
Now, consider the canonical decomposition of $\alpha$:
$$
\alpha=\alpha_1 \oplus \dots \oplus \alpha_l.
$$
Since $Q$ is a Dynkin or Euclidean quiver, the Schur roots $\alpha_i$ are
either real or isotropic. Using Theorem
\ref{canonical-decomp-thm}, we obtain that the canonical
decomposition of $n\alpha$ is:
$$
n\alpha=\alpha_1^{\oplus n} \oplus \dots \oplus \alpha_l^{\oplus
n}.
$$

It is well-known that the functions $\dim_k \Ext_Q^1(\cdot,W)$ and
$\dim_k \Hom_Q(\cdot,W)$ are upper semi-continuous. This fact
combined with Proposition \ref{orbit-semi-def-prop} allows us to
find a representation $V \in \Rep(Q,n\alpha)$ such that $V \perp W$ and $V$ has a direct summand of the form $\bigoplus_{i=1}^l V_i$ with $V_i$ an
indecomposable representation of dimension vector
$\alpha_i$, $1 \leq i \leq l$. Set $\widetilde{V}=\bigoplus_{i=1}^l V_i$. Then, $\widetilde{V}$ is an $\alpha$-dimensional representation with $\widetilde{V} \perp W$ and hence $\sigma \in S(W)$ by Proposition \ref{orbit-semi-def-prop}.
\end{proof}

\section{Reflection functors, the shrinking method, and exceptional sequences}\label{reduction-sec}

In this section, we describe three reduction methods that behave
nicely with respect to orbit semigroups. This will be particularly
useful when dealing with wild quivers.

A quiver $Q$ is said to satisfy \emph{property $(S)$} provided the
following is true: For every representation $W$ of $Q$, the orbit
semigroup $S(W)$ is saturated. It is clear that if $Q$ has
property $(S)$ then any (not necessarily full) subquiver has it.

\subsection{Reflection functors} Let $Q$ be a quiver and $x \in Q_0$ a vertex. Define
$s_{x}(Q)$ to be the quiver obtained from $Q$ by reversing all
arrows incident to $x$. The reflection transformation $s_x :
\ZZ^{Q_0} \to \ZZ^{Q_0}$ at vertex $x$ is defined by
$$
s_x(\alpha)(y)=\begin{cases}
\alpha(y) & \text{if~} y \neq x,\\
\sum_{ha=x}\alpha(ta)+\sum_{ta=x}\alpha(ha)-\alpha(x) & \text{if~}
y=x.
\end{cases}
$$
Note that $s_x$ is the reflection in the plane orthogonal to the
simple root $e_x$.

Now, let us assume that $x$ is a sink. The
Bernstein-Gelfand-Ponomarev reflection functor at $x$ is defined
as follows:
$$
\begin{aligned}
C^{+}_x : \Rep(Q) & \to \Rep(s_x(Q))\\
V&\mapsto W=C^{+}_x(V),
\end{aligned}
$$
where $W(y)=V(y)$ for $y \neq x$ and $W(x)=\ker(\bigoplus_{ha=x}
V(ta) \to V(x))$.

If $x$ is a source, we define $C^{-}_{x}$ by
$$
\begin{aligned}
C^{-}_x : \Rep(Q) & \to \Rep(s_x(Q))\\
V&\mapsto W=C^{-}_x(V),
\end{aligned}
$$
where $W(y)=V(y)$ for $y \neq x$ and $W(x)=\coker(V(x) \to
\bigoplus_{ta=x} V(ha))$.

The following theorem, proved by Bernstein-Gelfand-Ponomarev, is
one of the fundamental results about reflection functors.

\begin{theorem}\cite{BGP}\label{BGP} Let $Q$ be a quiver and $x \in Q_0$ a sink.
\begin{enumerate}
\renewcommand{\theenumi}{\arabic{enumi}}

\item If $V=S_x$ then $C^{+}_x(V)=0$.

\item If $V\neq S_x$ is indecomposable then $C^{+}_x(V)$ is
indecomposable, too. Furthermore, $C^{-}_xC^{+}_x(V)=V$ and
$$
\underline{d}_{C^{+}_x(V)}=s_x(\underline{d}_{V}).
$$

\end{enumerate}

The analogous result for $C^{-}_x$ with $x$ a source is also true.
\end{theorem}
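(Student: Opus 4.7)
The plan is to give the standard proof following Bernstein--Gelfand--Ponomarev, built around one key observation: if $x$ is a sink and $f_V: \bigoplus_{ha=x} V(ta) \to V(x)$ denotes the natural map assembled from the $V(a)$'s, then $\coker(f_V)$ is canonically a direct summand of $V$ isomorphic to $S_x^{\oplus r}$, where $r = \dim_k \coker(f_V)$. This is because $x$ being a sink means there are no arrows leaving $x$, so any complement to $\Ima(f_V)$ inside $V(x)$ (with zero on every other vertex) defines a subrepresentation of $V$, and picking a complement containing $\Ima(f_V)$ on the other side gives a complementary subrepresentation. This observation will be used twice.

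For part (1), I would just unwind the definitions: $S_x(ta) = 0$ for every arrow $a$ with $ha = x$ (since $Q$ has no oriented cycles and $x$ is a sink, we in particular have $ta \neq x$), so $\bigoplus_{ha=x} S_x(ta) = 0$, the map into $S_x(x) = k$ is zero, and its kernel is $0$. Combined with $C^+_x(S_x)(y) = S_x(y) = 0$ for $y \neq x$, this gives $C^+_x(S_x) = 0$.

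For part (2), I would proceed in three steps. \emph{Step 1 (surjectivity).} Since $V$ is indecomposable and $V \neq S_x$, the observation above forces $r = 0$, so $f_V$ is surjective; hence the defining exact sequence
\[
0 \to C^+_x(V)(x) \to \bigoplus_{ha=x} V(ta) \to V(x) \to 0
\]
gives $\dim_k C^+_x(V)(x) = \sum_{ha=x}\underline d_V(ta) - \underline d_V(x) = s_x(\underline d_V)(x)$, and the dimension-vector formula follows at once. \emph{Step 2 ($C^-_x C^+_x(V) = V$).} In $s_x(Q)$ the vertex $x$ is a source, and the map $C^+_x(V)(x) \to \bigoplus_{ta^*=x} C^+_x(V)(ha^*) = \bigoplus_{ha=x} V(ta)$ appearing in the definition of $C^-_x$ is precisely the tautological inclusion $\ker f_V \hookrightarrow \bigoplus_{ha=x} V(ta)$. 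Its cokernel is $\Ima(f_V) = V(x)$, with the induced maps from $V(ta)$ recovering the original $V(a)$'s; this identifies $C^-_x C^+_x(V)$ with $V$ naturally. \emph{Step 3 (indecomposability).} I would first show the dual statement for $C^-_x$: if $W$ is indecomposable over $s_x(Q)$ and $W \neq S_x$ (the simple at the source $x$), then $C^-_x(W) \neq 0$ and $C^+_x C^-_x(W) = W$; the proof is symmetric. Next I would check that $S_x$ is not a direct summand of $C^+_x(V)$: by construction $C^+_x(V)(x) = \ker f_V$ sits inside $\bigoplus_{ha=x} V(ta)$, so the combined map to the neighboring vertices is injective, whereas an $S_x$ summand would sit in the intersection of the kernels of these component maps. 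Now suppose $C^+_x(V) = W_1 \oplus W_2$. Neither $W_i$ can have $S_x$ as a summand, so by additivity of $C^-_x$ and Step~2, $V = C^-_x(W_1) \oplus C^-_x(W_2)$, both summands nonzero; this contradicts indecomposability of $V$ unless one $W_i$ is zero.

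The only slightly delicate point is the \textbf{indecomposability argument in Step 3}, because it relies on knowing that $C^-_x$ kills exactly the simple $S_x$ and nothing else indecomposable, which itself requires establishing the dual of Steps 1--2 for $C^-_x$. Once that dual statement is in hand, indecomposability is immediate from the quasi-inverse property. The analogue for $C^-_x$ with $x$ a source is proved by exchanging the roles of $\ker$ and $\coker$, sinks and sources throughout.
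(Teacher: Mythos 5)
The paper does not prove this theorem; it is stated as a citation to Bernstein--Gelfand--Ponomarev, so there is no in-paper argument to compare against. Your proof is the standard BGP one and it is essentially correct: the key observation that for a sink $x$ the cokernel of $f_V\colon\bigoplus_{ha=x}V(ta)\to V(x)$ splits off as $S_x^{\oplus r}$ (because $x$ being a sink lets a complement of $\operatorname{Im} f_V$ sit with zeros elsewhere as a subrepresentation) is exactly the right engine, part (1) is a direct unwinding of definitions, the dimension formula follows from exactness once $f_V$ is surjective, and the quasi-inverse identification of $C^-_xC^+_x(V)$ with $V$ via $\operatorname{coker}(\ker f_V\hookrightarrow\bigoplus V(ta))\cong V(x)$ is the standard computation.

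One small imprecision worth tightening: in Step 3 you invoke ``additivity of $C^-_x$ and Step 2'' to conclude $V=C^-_x(W_1)\oplus C^-_x(W_2)$ with both nonzero, but your Step 2 was stated only for indecomposable $V\neq S_x$, whereas the $W_i$ here need not be indecomposable. The fix is to note that the snake-lemma argument in Step 2 only uses surjectivity of $f_V$ (equivalently $S_x\nmid V$), not indecomposability, and dually that $C^+_xC^-_x(W)\cong W$ holds for any $W$ with $S_x\nmid W$; your verification that $S_x\nmid W_i$ (because $f^-_{C^+_x(V)}$ is the inclusion $\ker f_V\hookrightarrow\bigoplus V(ta)$, hence injective on each summand) is precisely what makes this work. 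With that phrasing adjustment the argument is complete and correct; the passage to $C^-_x$ with $x$ a source is indeed formally dual, exchanging kernels with cokernels and injectivity with surjectivity throughout.
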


Using the First Fundamental Theorem for special linear groups (see
for example \cite{DK}), Kac \cite[Sections 2 and 3]{Kac} showed
that reflection functors behave nicely with respect to
semi-invariants of quivers. Other geometric properties that are
preserved by reflection functors can be found in \cite[Section
6]{CW}. We are going to show that property $(S)$ defined above is
also preserved by reflection functors. The following proposition
will be very useful for us:

\begin{proposition}\cite{APR}\label{semi-reflect-prop} Let $V$ and $W$ be
two representations of $Q$. Assume that $x$ is a sink and $S_x$ is
not a direct summand of $V$ or $W$. Then:
$$
\dim_k \Hom_{Q}(V,W)=\dim_k \Hom_{s_x(Q)}(C^{+}_x(V),C^{+}_x(W)),
$$
and
$$
\dim_k \Ext^1_{Q}(V,W)=\dim_k \Ext^1_{s_x(Q)}(C^{+}_x(V),C^{+}_x(W)).
$$

Consequently, one has $$V \perp W \Longleftrightarrow C^{+}_x(V)
\perp C^{+}_x(W).$$ The same is true when $x$ is a source and
$C^{+}_x$ is replaced by $C^{-}_x$.
\end{proposition}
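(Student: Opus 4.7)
The plan is to first establish the Hom equality by an explicit isomorphism, and then to deduce the Ext equality from it together with the Ringel canonical exact sequence \eqref{can-exact-seq}. The $V \perp W$ equivalence is then immediate, and the case of a source reduces to the sink case by symmetry (apply the argument to the opposite quiver, or equivalently use that $C^{+}_x$ and $C^{-}_x$ invert each other on non-$S_x$ summands by Theorem \ref{BGP}).

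Since $x$ is a sink and $S_x$ is not a direct summand of $V$, the canonical map $\iota_V : \bigoplus_{ha=x} V(ta) \to V(x)$ is surjective, giving a short exact sequence
$$
0 \to V'(x) \to \bigoplus_{ha=x} V(ta) \xrightarrow{\iota_V} V(x) \to 0,
$$
where $V' = C^{+}_x(V)$; analogously for $W' = C^{+}_x(W)$. Any $\varphi \in \Hom_Q(V,W)$ satisfies $\varphi(x) V(a) = W(a)\varphi(ta)$ for all $a$ with $ha=x$, which means that the square relating $\iota_V$ and $\iota_W$ commutes. This square restricts to a map $\varphi'(x) : V'(x) \to W'(x)$ on kernels; together with the identifications $\varphi'(y) = \varphi(y)$ for $y \neq x$ one checks directly that $\varphi'$ is a morphism in $\Rep(s_x(Q))$. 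This produces a linear map $F : \Hom_Q(V,W) \to \Hom_{s_x(Q)}(V',W')$.

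Next I would show that $F$ is a bijection. Injectivity uses surjectivity of $\iota_V$: if $\varphi(y)=0$ for $y \neq x$ and $\varphi'(x)=0$, then $\varphi(x) \circ \iota_V = \iota_W \circ 0 = 0$, forcing $\varphi(x) = 0$. For surjectivity, given $\psi \in \Hom_{s_x(Q)}(V',W')$, the compatibility of $\psi$ at the reversed arrows $\bar a$ (incident to the now-source $x$) produces a commutative square between the inclusions $V'(x) \hookrightarrow \bigoplus V(ta)$ and $W'(x) \hookrightarrow \bigoplus W(ta)$. Passing to cokernels, which are $V(x)$ and $W(x)$ respectively by surjectivity of $\iota_V$ and $\iota_W$, yields the required $\varphi(x):V(x) \to W(x)$; together with the $\psi(y)$ for $y \neq x$, this assembles into a $\varphi \in \Hom_Q(V,W)$ with $F(\varphi)=\psi$.

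For the Ext equality, I would plug both pairs $(V,W)$ and $(V',W')$ into \eqref{can-exact-seq} to get $\dim \Hom_Q(V,W) - \dim \Ext^1_Q(V,W) = \langle \underline d_V, \underline d_W \rangle_Q$ and its analogue over $s_x(Q)$. A short direct computation, isolating the vertex $x$ and the arrows incident to $x$ and using $s_x(\alpha)(x) = \sum_{ha=x}\alpha(ta)-\alpha(x)$, verifies the invariance $\langle s_x(\alpha), s_x(\beta)\rangle_{s_x(Q)} = \langle \alpha,\beta\rangle_Q$; combined with $\underline d_{V'} = s_x(\underline d_V)$ and $\underline d_{W'} = s_x(\underline d_W)$ from Theorem \ref{BGP}, this forces the two Ext-dimensions to agree once the Hom-dimensions do. The main obstacle I anticipate is the surjectivity half of the Hom isomorphism, since it requires a careful check that the cokernel-lifting procedure really produces morphisms compatible with the original (unreversed) arrow data; once the kernel/cokernel bookkeeping is set up cleanly, the remainder is elementary linear algebra.
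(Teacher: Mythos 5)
Your argument is correct, and it supplies an actual proof where the paper merely cites \cite{APR} without giving one. Briefly comparing: the reference \cite{APR} establishes this statement in the language of tilting/torsion theory for Artin algebras (the reflection functor is $\Hom_\Lambda(T,-)$ for an APR tilt), from which the homological invariance is a formal consequence. Your proof is more elementary and closer to the original BGP construction: the explicit kernel/cokernel identification of $\Hom_Q(V,W)$ with $\Hom_{s_x(Q)}(C^+_xV,C^+_xW)$ is carried out directly, using surjectivity of $\iota_V$ and $\iota_W$ (which is exactly where the hypothesis $S_x\nmid V$, $S_x\nmid W$ enters, for injectivity and surjectivity respectively). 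The deduction of the $\Ext^1$ equality from the $\Hom$ equality via $\dim\Hom-\dim\Ext^1=\langle\underline d_V,\underline d_W\rangle$ together with the identity $\langle s_x(\alpha),s_x(\beta)\rangle_{s_x(Q)}=\langle\alpha,\beta\rangle_Q$ for $x$ a sink is a clean shortcut that avoids having to show any exactness of $C^+_x$; this is genuinely necessary because the reflection functors are not exact. One point worth flagging if you write this up: the Euler-form invariance you invoke is a statement comparing two different quivers, $Q$ and $s_x(Q)$, and does not follow from the familiar $s_x$-invariance of the symmetrized Tits form on a fixed quiver, so the ``short direct computation'' you mention really is required (and does check out for $x$ a sink, using that there are no arrows with $ta=x$). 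The reduction of the source case to the sink case via the opposite quiver also works, since $C^+_x$ on $Q^{\mathrm{op}}$ corresponds to $C^-_x$ on $Q$ and duality swaps the two arguments of $\Hom$ and $\Ext^1$ while preserving dimensions.
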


Next, we show that when checking property $(S)$, we can actually
avoid those representations that have direct summands isomorphic
to simple representations. We write $S_x \nmid W$ if $W$ does not
have a direct summand isomorphic to $S_x$. A representation $W$
whose dimension vector is sincere is called a \emph{sincere
representation}.

\begin{lemma}\label{semigr-directsum-lemma}
\begin{enumerate}
\renewcommand{\theenumi}{\arabic{enumi}}
\item Let $W=W_1 \oplus W_2$ be a representation with $S(W_1)$ and
$S(W_2)$ saturated. Then, $S(W)$ is saturated, too.

\item Let $x$ be a vertex of $Q$. Then, $Q$ has property $(S)$ if
and only if $S(W)$ is saturated for every representation $W$ such
that $S_x \nmid W$.

\end{enumerate}
\end{lemma}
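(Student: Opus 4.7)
The plan is to use Proposition \ref{orbit-semi-def-prop} throughout, which translates $\sigma \in S(W)$ into the existence of an $\alpha$-dimensional representation $V$ with $V \perp W$, together with the elementary observation that $V \perp W_1 \oplus W_2$ if and only if $V \perp W_1$ and $V \perp W_2$.

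For part (1), I would first pass to the full subquiver on $\supp W$ in order to assume $W$ is sincere (saturation of $S(W_i)_Q$ is unaffected by this reduction since it is the pullback of saturation of the orbit semigroup over any subquiver containing $\supp W_i$). Writing $\sigma = \langle \alpha, \cdot \rangle$ and supposing $n\sigma \in S(W)$ for some $n \geq 1$, the Spanning Theorem forces $\alpha \in \ZZ^{Q_0}_{\geq 0}$, and Proposition \ref{orbit-semi-def-prop} produces $V \in \Rep(Q, n\alpha)$ with $V \perp W$. Orthogonality to the direct sum yields $n\sigma \in S(W_i)$ for $i = 1,2$, and the hypothesis that $S(W_1)$ and $S(W_2)$ are saturated then gives $\sigma \in S(W_1) \cap S(W_2)$. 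The main obstacle, and the only non-formal step in the argument, is to upgrade this to a single $V \in \Rep(Q, \alpha)$ perpendicular to both $W_1$ and $W_2$ rather than two separate witnesses $V_1, V_2$. For this I would invoke geometry: the sets $\mathcal{U}_i = \{V \in \Rep(Q,\alpha) : V \perp W_i\}$ are non-empty Zariski open subsets of $\Rep(Q,\alpha)$ by upper semi-continuity of $\dim_k \Hom_Q(\cdot, W_i)$ and $\dim_k \Ext^1_Q(\cdot, W_i)$, and $\Rep(Q,\alpha)$ is an irreducible affine space, so $\mathcal{U}_1 \cap \mathcal{U}_2 \neq \emptyset$. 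Any $V$ in the intersection satisfies $V \perp W$, and another application of Proposition \ref{orbit-semi-def-prop} delivers $\sigma \in S(W)$.

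For part (2) the forward direction is immediate. For the converse, given any $W$, I would decompose $W = W' \oplus S_x^{m}$ with $S_x \nmid W'$ and $m \geq 0$. When $m = 0$ the hypothesis applies directly to $W$. When $m > 0$, part (1) reduces everything to checking that $S(S_x^{m})$ is saturated, which is a direct inspection: since $Q$ has no oriented cycles there are no loops at $x$, hence $\Rep(Q, m e_x)$ reduces to a single point and the character of $\GL(m e_x)$ attached to $\sigma \in \ZZ^{Q_0}$ is trivial precisely when $\sigma(x) = 0$. Consequently $S(S_x^{m}) = \{\sigma \in \ZZ^{Q_0} : \sigma(x) = 0\}$ is a sublattice and in particular saturated. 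Combining this with part (1) applied to $W = W' \oplus S_x^{m}$ finishes the argument.
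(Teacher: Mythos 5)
Your proposal is correct and follows essentially the same route as the paper's proof: reduce to sincere $W$, use the Spanning Theorem and Proposition \ref{orbit-semi-def-prop} to translate $n\sigma\in S(W)$ and then $\sigma\in S(W_1)\cap S(W_2)$ into orthogonality conditions, and use upper semi-continuity (i.e., the two non-empty open loci in the irreducible space $\Rep(Q,\alpha)$ must intersect) to produce a single $V\perp W_1\oplus W_2$. Part (2) is likewise handled as in the paper, by combining part (1) with the saturation of the orbit semigroup of (powers of) $S_x$, which you simply make explicit by computing $S(S_x^m)=\{\sigma\mid\sigma(x)=0\}$.
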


\begin{proof}$(1)$ We can clearly assume that $W$ is a sincere representation. From Proposition \ref{orbit-semi-def-prop}, it follows that
$$
S(W)\subseteq S(W_1) \cap S(W_2).
$$

Now, let $\sigma \in \ZZ^{Q_0}$ be so that $n\sigma \in S(W)$ for
some positive integer $n$. Then, $\sigma=\langle \alpha, \cdot
\rangle$ for a unique dimension vector $\alpha$, and since
$S(W_i)$ are assumed to be saturated, we obtain that $\sigma \in
S(W_1) \cap S(W_2)$. This is equivalent to the existence of $V_i
\in \Rep(Q,\alpha)$ such that
$$
\Ext^1_Q(V_i,W_i)=\Hom_Q(V_i,W_i)=0,
$$
for $i \in \{1,2\}$. Using the upper semi-continuity of the
functions $\dim_k \Ext_Q^1(\cdot,W_i)$ and $\dim_k \Hom_Q(\cdot,W_i)$,
$i \in \{1,2\}$, we can find a representation $V\in
\Rep(Q,\alpha)$ such that $\Ext_Q^1(V,W_i)=\Hom_Q(V,W_i)=0,
\forall i \in \{1,2\}$. Applying Proposition
\ref{orbit-semi-def-prop} again, we get $\sigma \in S(W)$.

$(2)$ The proof follows from part $(1)$ and the fact that the
orbit semigroup of $S_x$ is clearly saturated.
\end{proof}

For our purposes, we actually need to strengthen Lemma
\ref{semigr-directsum-lemma}(2):

\begin{lemma} \label{sat-orbit-semigr-lemma} Let $W \in \Rep(Q,\beta)$ and $x \in Q_0$. Consider the
set
$$
S_x(W)=\{\sigma= \langle \alpha, \cdot \rangle \in \ZZ^{Q_0} \mid
\exists V \in \Rep(Q,\alpha) \text{~such that~} V \perp W
\text{~and~} S_x \nmid V \}.
$$
If $S_x(W)$ is saturated then so is $S(W)$.
\end{lemma}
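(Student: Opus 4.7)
The plan is to split on whether $S_x \perp W$. If $S_x \not\perp W$, then no representation orthogonal to $W$ can have $S_x$ as a direct summand (such a summand would itself be $\perp W$), so $S(W) = S_x(W)$ and the saturation is immediate from the hypothesis.

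Now assume $S_x \perp W$, and set $\tau = \langle e_x, \cdot\rangle$. Decomposing any witness $V \perp W$ as $V' \oplus S_x^{\oplus m}$ with $S_x \nmid V'$, and conversely appending $S_x^{\oplus m}$ to any $S_x(W)$-witness (permitted because $S_x \perp W$), gives the decomposition $S(W) = S_x(W) + \ZZ_{\geq 0}\tau$. Suppose now $n\sigma \in S(W)$; the Spanning Theorem forces $\sigma = \langle \alpha,\cdot\rangle$ for some $\alpha \in \ZZ^{Q_0}_{\geq 0}$, and Proposition \ref{orbit-semi-def-prop} produces $V \in \Rep(Q, n\alpha)$ with $V \perp W$. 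The key idea is to choose $V$ as \emph{generic} as possible: the locus $\{V \in \Rep(Q,n\alpha) : V \perp W\}$ is open in $\Rep(Q,n\alpha)$ (by the upper semi-continuity of $\dim_k\Hom_Q(\cdot,W)$ and $\dim_k\Ext^1_Q(\cdot,W)$) and non-empty, so it contains a representation whose indecomposable decomposition realizes the canonical decomposition of $n\alpha$.

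Schofield's Theorem \ref{canonical-decomp-thm} now does the work. Write $\alpha = \alpha_1 \oplus \ldots \oplus \alpha_l$ for the canonical decomposition of $\alpha$, and let $k = \#\{i : \alpha_i = e_x\}$. Since $e_x$ is a real Schur root, each such $\alpha_i$ contributes $e_x^{\oplus n}$ to the canonical decomposition of $n\alpha$, so $e_x$ appears exactly $kn$ times; every remaining summand has a dimension vector different from $e_x$ and is therefore not isomorphic to $S_x$. Hence the generic witness splits as $V = V' \oplus S_x^{\oplus kn}$ with $S_x \nmid V'$ and $V' \perp W$, so $V'$ witnesses $n(\sigma - k\tau) \in S_x(W)$. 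Saturation of $S_x(W)$ yields $\sigma - k\tau \in S_x(W) \subseteq S(W)$, and then $\sigma = (\sigma - k\tau) + k\tau \in S(W)$ (the final step appends $S_x^{\oplus k}$, which is allowed by $S_x \perp W$).

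The heart of the argument, and the step where Schofield's theorem is indispensable, is arranging that the $S_x$-multiplicity of the witness for $n\sigma$ be divisible by $n$; once this divisibility is in hand, saturation of $S_x(W)$ combined with the identity $S(W) = S_x(W) + \ZZ_{\geq 0}\tau$ does the rest.
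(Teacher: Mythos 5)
Your argument for the case of a \emph{sincere} representation $W$ is correct, and it is essentially the paper's Case 1 in a slightly different packaging: the paper also picks a witness $V\perp W$ realizing the canonical decomposition of $n\alpha$ (via semicontinuity plus Schofield's Theorem \ref{canonical-decomp-thm}), but instead of stripping off all copies of $S_x$ at once and applying saturation of $S_x(W)$ a single time to the complement, it applies saturation separately to each non-isotropic imaginary summand $n\alpha_i$ (whose indecomposable witness automatically avoids $S_x$) and reuses the real/isotropic pieces $V_{j1}$ of the generic witness directly; in particular the paper never needs your case split on whether $S_x\perp W$, since any $e_x$ occurring among the real Schur roots is handled by the summand $S_x=V_{j1}$ of $V$, which is already orthogonal to $W$.

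The genuine gap is that you have silently assumed $\beta$ sincere, and several of your steps fail without that assumption, whereas the lemma is stated (and later used) for arbitrary $W\in\Rep(Q,\beta)$. The Spanning Theorem \ref{spanning-thm} requires $\beta$ sincere, so for non-sincere $\beta$ you cannot conclude from $n\sigma\in S(W)$ that $\sigma=\langle\alpha,\cdot\rangle$ with $\alpha\in\ZZ^{Q_0}_{\geq 0}$; indeed, when $\beta(y)=0$ at some vertex $y$, the character of $\GL(\beta)$ attached to $\sigma$ ignores $\sigma(y)$, so $S(W)$ contains weights whose associated $\alpha$ is not a dimension vector. For the same reason both of your identities $S(W)=S_x(W)$ (Case $S_x\not\perp W$) and $S(W)=S_x(W)+\ZZ_{\geq 0}\tau$ (Case $S_x\perp W$) are false in general: the right-hand sides consist only of weights of the form $\langle\alpha,\cdot\rangle$ with $\alpha\geq 0$, while $S(W)$ does not. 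The paper's proof devotes its Case 2 to exactly this point: one passes to the full subquiver $Q'$ on $\supp(W)$, uses that $S(W)_Q$ is the preimage of $S(W')_{Q'}$ under restriction (so saturation of $S(W')_{Q'}$ suffices), and, crucially, checks that the hypothesis transfers, i.e.\ that saturation of $S_x(W)_Q$ forces saturation of $S_x(W')_{Q'}$ via the map $\langle\alpha',\cdot\rangle_{Q'}\mapsto\langle\alpha',\cdot\rangle_{Q}$ (including the degenerate situation $x\notin\supp(W)$, where the condition $S_x\nmid V'$ is vacuous for representations supported on $Q'$). Your proof needs this reduction, or some substitute for it, to cover the stated generality.
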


\begin{proof}
We distinguish two cases:

\emph{Case 1:} $W$ is a sincere representation. Let
$\sigma=\langle \alpha, \cdot \rangle \in \ZZ^{Q_0}$ be a weight and assume that there exists an integer $n>1$ such that $n\sigma \in
S(W)$.

We know that $\alpha$ has to be a
dimension vector by Theorem \ref{spanning-thm}. Consider the canonical decomposition of
$\alpha$:
$$
\alpha=\alpha_1 \oplus \dots \oplus \alpha_l.
$$
Without loss of generality, let us assume that $\alpha_1, \dots,
\alpha_m$ are the non-isotropic, imaginary Schur roots in the
decomposition above (of course, we allow $m$ to be zero). From
Theorem \ref{canonical-decomp-thm}, we obtain that
$$
n\alpha=n\alpha_1 \oplus \dots \oplus n\alpha_m\oplus
\alpha_{m+1}^{\oplus n} \oplus \dots \oplus \alpha_l^{\oplus n}
$$
is the canonical decomposition of $n\alpha$.

Now, choose an $n\alpha$-dimensional representation $V$ such that
$V \perp W$ and $$V=\bigoplus_{i=1}^m V'_i \oplus \bigoplus_{\buildrel {m+1 \leq j \leq l} \over {1 \leq k \leq n}
}V_{jk}$$ where the $V'_i$ are indecomposables of dimension vectors $n\alpha_i$, $1 \leq i \leq m$, and the $V_{jk}$ are indecomposables of dimension vectors $\alpha_j$, $1 \leq k \leq n, m+1 \leq j\leq l$.

Note that $V'_i \perp W$ and $S_x \nmid V'_i$ for every $ 1 \leq i
\leq m$. Since $S_x(W)$ is assumed to be saturated, it follows that
$\langle \alpha_i, \cdot \rangle \in S_x(W)$. So, we can choose
$\alpha_i$-dimensional representations $V_i$ such that
$V_i \perp W$ for all $1\leq i \leq m$.

Set $U=\bigoplus_{i=1}^{m} V_i \oplus \bigoplus_{j=m+1}^{l} V_{j1}$. Then,
$U$ is an $\alpha$-dimensional representation with $U \perp W$ and this finishes the proof in the sincere case.

\emph{Case 2:} $W$ is not necessarily a sincere representation.
Let $Q'$ be the full subquiver of $Q$ whose vertex set is
$\supp(W)=\{x \in Q_0 \mid W(x)\neq \{0\} \}$. Denote by $W'$ the
restriction of $W$ to $Q'$. Then, $W'$ is a sincere representation
of $Q'$ and we clearly have that $S(W)_{Q}$ is saturated if and
only if $S(W')_{Q'}$ is saturated.

Now, consider the following linear transformation:
$$
\begin{aligned}
I: \ZZ^{Q'_0}& \to \ZZ^{Q_0}\\
\sigma'=\langle \alpha', \cdot \rangle_{Q'}& \to \langle \alpha',
\cdot \rangle_{Q},
\end{aligned}
$$
where any $\alpha' \in \ZZ^{Q'_0}$ is viewed as an element of
$\ZZ^{Q_0}$ in a natural way. Given a weight $\sigma' \in
\ZZ^{Q'_0}$, we clearly have $\sigma' \in S_{x}(W')_{Q'}$ if and
only if $I(\sigma') \in S_{x}(W)_Q$. Hence, $S_x(W')_{Q'}$ must be
saturated. But now, this implies that $S(W)_{Q}$ is saturated, as
well.

\end{proof}

Now, we are ready to prove:

\begin{proposition}\label{tool-reflection} Let $Q$ be a quiver without oriented cycles and let $x$ be either a source or a
sink. Then, $Q$ satisfies property $(S)$ if and only if so does
$s_x(Q)$.
\end{proposition}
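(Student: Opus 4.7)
The plan is to reduce saturation of $S(W)_Q$ to saturation of a corresponding semigroup in $s_x(Q)$ via the Bernstein-Gelfand-Ponomarev reflection functor, and then invoke property $(S)$ for $s_x(Q)$. Since the statement is symmetric in $Q$ and $s_x(Q)$ (as $s_x(s_x(Q))=Q$), and since interchanging $C^+_x$ with $C^-_x$ swaps sinks with sources, it suffices to treat one case: I will assume $x$ is a sink of $Q$ and prove $s_x(Q) \in (S) \Rightarrow Q \in (S)$.

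Fix $W \in \Rep(Q)$; by Lemma \ref{semigr-directsum-lemma}(2), we may assume $S_x \nmid W$. If $W(x) = 0$, then $\supp(W)$ is simultaneously a subquiver of $Q$ and of $s_x(Q)$, and the inheritance of property $(S)$ by subquivers (observed at the start of Section \ref{reduction-sec}) yields saturation of $S(W|_{\supp W})$, hence of its inverse image $S(W)_Q$. So I may assume $W(x) \neq 0$. Set $U := C^+_x(W) \in \Rep(s_x(Q))$, and note that by Lemma \ref{sat-orbit-semigr-lemma} it then suffices to show $S_x(W)_Q$ is saturated.

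The reflection functor, combined with Proposition \ref{semi-reflect-prop} and Theorem \ref{BGP}, sends $V \mapsto C^+_x(V)$ and induces a bijection between $\alpha$-dimensional $Q$-representations $V$ with $S_x \nmid V$ and $V \perp W$, and $s_x(\alpha)$-dimensional $s_x(Q)$-representations $V'$ with $S_x \nmid V'$ and $V' \perp U$. At the level of weights, this promotes to a $\ZZ$-linear involution of $\ZZ^{Q_0}$ sending $\sigma = \langle \alpha,\cdot\rangle_Q$ to $\langle s_x(\alpha),\cdot\rangle_{s_x(Q)}$, which carries $S_x(W)_Q$ bijectively onto $S_x(U)_{s_x(Q)}$. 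Since a $\ZZ$-linear bijection preserves saturation, the problem reduces to showing that $S_x(U)_{s_x(Q)}$ is saturated.

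The main obstacle is that property $(S)$ for $s_x(Q)$ only guarantees saturation of the full semigroup $S(U)_{s_x(Q)}$, not of the smaller $S_x(U)_{s_x(Q)}$. I would bridge this gap by establishing the equality $S_x(U)_{s_x(Q)} = S(U)_{s_x(Q)}$, which holds whenever $S_x \not\perp U$ (any $V''$ with $S_x$ as a direct summand and $V'' \perp U$ would force $S_x \perp U$). Since $x$ is a source of $s_x(Q)$, a direct computation with Ringel's canonical sequence (\ref{can-exact-seq}) gives $\Hom_{s_x(Q)}(S_x,U)=0$ and $\Ext^1_{s_x(Q)}(S_x, U) \cong \coker\bigl(U(x) = \ker M_W \hookrightarrow \bigoplus_{ha=x} W(ta)\bigr)$, where $M_W := \bigoplus_{ha=x} W(a)$ is the canonical map; because $S_x \nmid W$ makes $M_W$ surjective, this cokernel is isomorphic to $W(x)$, which is nonzero by assumption. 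Hence $S_x \not\perp U$, so $S_x(U)_{s_x(Q)} = S(U)_{s_x(Q)}$ is saturated by property $(S)$ for $s_x(Q)$, and tracing back through the bijection and Lemma \ref{sat-orbit-semigr-lemma} completes the proof.
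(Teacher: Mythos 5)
Your proof is correct and follows the same overall route as the paper's: reduce via Lemma \ref{semigr-directsum-lemma}(2) to $S_x \nmid W$, use Theorem \ref{BGP} and Proposition \ref{semi-reflect-prop} to set up the linear bijection between $S_x(W)_Q$ and $S_x(U)_{s_x(Q)}$ (with $U$ the reflected representation), and then close with Lemma \ref{sat-orbit-semigr-lemma}. The one place you genuinely add something is the bridging step. The paper, after establishing the equivalence $\langle\alpha,\cdot\rangle_Q \in S_x(W)_Q \Leftrightarrow \langle s_x(\alpha),\cdot\rangle_{\widetilde{Q}} \in S_x(\widetilde{W})_{\widetilde{Q}}$, simply asserts that ``this latter set is saturated by assumption.'' But property $(S)$ for $\widetilde{Q}=s_x(Q)$ only gives saturation of $S(\widetilde{W})_{\widetilde{Q}}$, and Lemma \ref{sat-orbit-semigr-lemma} runs in the wrong direction to deduce saturation of $S_x(\widetilde{W})_{\widetilde{Q}}$ from that. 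Your argument fills this in cleanly: when $W(x)\neq 0$ and $S_x\nmid W$, the computation $\Ext^1_{s_x(Q)}(S_x,U)\cong W(x)\neq 0$ via Ringel's sequence shows $S_x\not\perp U$, so no $V\perp U$ can have $S_x$ as a direct summand, forcing $S_x(U)_{s_x(Q)}=S(U)_{s_x(Q)}$; the degenerate case $W(x)=0$ you dispose of separately using inheritance of property $(S)$ by subquivers applied to $\supp(W)$. Your choice of taking $x$ a sink of $Q$ (rather than a source, as the paper does) is what makes the controlling quantity live in the original quiver, which is what lets the $W(x)=0$ case fall to the subquiver argument so easily. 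So: same skeleton as the paper, but with the gap in ``saturated by assumption'' explicitly closed.
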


\begin{proof} It is enough to show that $Q$ satisfies property $(S)$ assuming that $x$ is a source and $\widetilde{Q}=s_x(Q)$ satisfies
$(S)$. Let $W \neq 0$ be a representation of $Q$. By Lemma
\ref{semigr-directsum-lemma}, we can assume that $S_x \nmid W$ and
let us denote $C^{-}_x(W)$ by $\widetilde{W}$. Then, it follows from Theorem
\ref{BGP} that $S_x \nmid \widetilde{W}$ and $C^{+}_x(\widetilde{W})=W$.

Now, let $\alpha$ be a dimension vector of $Q$. Using Theorem
\ref{BGP} again and then Proposition \ref{semi-reflect-prop}, we
deduce that $\langle \alpha, \cdot \rangle_{Q} \in S_x(W)_{Q}$ if
and only if $\langle s_x(\alpha), \cdot \rangle_{\widetilde{Q}} \in
S_x(\widetilde{W})_{\widetilde{Q}}$. But this latter set is saturated by assumption and
hence $S_x(W)$ must be saturated. The proof follows now from Lemma
\ref{sat-orbit-semigr-lemma}.
\end{proof}

\subsection{The shrinking method} Using the First Fundamental
Theorem for general linear groups, it is often possible to shrink
paths to just one arrow and still preserve weight spaces of
semi-invariants. The following shrinking procedure appears in some
form in \cite{DW1}, \cite{DL}, \cite{SW2}, and \cite{SW1}. We
include a proof since it is straight forward.

\begin{lemma}\label{reductionlemma.swo}
Let $Q$ be a quiver and $v_0$ a vertex such that near $v_0,$ $Q$
looks like:
$$
\xy     (0,0)*{v_0}="a";
        (-15,5)*{v_1}="b";
        (-15,-5)*{v_l}="c";
        (15,0)*{w}="d";
        {\ar^{a_1} "b";"a"};
        {\ar_{a_l} "c";"a"};
        {\ar^{b} "a";"d"};
        {\ar@{.} "b";"c"};
\endxy
$$

Suppose that $\beta$ is a dimension vector and $\sigma$ is a
weight such that $$\beta(v_0)\geq \beta(w)
\text{~and~}\sigma(v_0)=0.$$ Let $\overline Q$ be the quiver
defined by $\overline Q_0 = Q_0 \setminus \{ v_0 \}$ and
$\overline Q_1 = (Q_1\setminus \{b,a_1, \dots, a_l\}) \cup
\{ba_1,\dots,ba_l \}$. If $\overline \beta=\beta|_{\overline Q}$
and $\overline\sigma=\sigma|_{\overline Q}$ are the restrictions
of $\beta$ and $\sigma$ to $\overline Q$ then
$$ \SI(Q,\beta)_{\sigma}\cong \SI(\overline
Q,\overline\beta)_{\overline\sigma}.
$$
\end{lemma}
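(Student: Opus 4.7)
The plan is to realize the claimed isomorphism via the natural path-composition morphism
\[
\pi \colon \Rep(Q,\beta) \to \Rep(\overline{Q},\overline{\beta}),
\]
that sends a representation $V$ to the representation $\overline{V}$ with $\overline{V}(ba_i) = V(b)\,V(a_i)$ for $i = 1, \dots, l$, and $\overline{V}(c) = V(c)$ for every other arrow $c$ of $\overline{Q}$. The comparison map in the lemma will be $\pi^*$.

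First I would check that $\pi^*$ carries $\SI(\overline{Q},\overline{\beta})_{\overline{\sigma}}$ into $\SI(Q,\beta)_{\sigma}$ and is injective. The weight matching is a direct computation: $\GL(\beta(v_i))$ acts on $V(a_i)$ by right multiplication by $g_{v_i}^{-1}$ and on $V(b)V(a_i)$ identically, while $\GL(\beta(w))$ acts on $V(b)$ by left multiplication by $g_w$ and on $V(b)V(a_i)$ identically; at $v_0$ there is nothing to check since $\sigma(v_0) = 0$. For injectivity, I use the hypothesis $\beta(v_0) \geq \beta(w)$: fix a surjective linear map $B_0 \colon k^{\beta(v_0)} \twoheadrightarrow k^{\beta(w)}$ and, given any target datum $(C_i)$ in $\Rep(\overline{Q},\overline{\beta})$, lift each $C_i$ through $B_0$ to some $A_i$ with $B_0 A_i = C_i$. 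This exhibits $\pi$ as surjective, hence $\pi^*$ is injective.

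For surjectivity of $\pi^*$, the key observation is that because $\sigma(v_0) = 0$, every $f \in \SI(Q,\beta)_{\sigma}$ is in particular invariant under $\GL(\beta(v_0))$ (acting only at the vertex $v_0$). I would then invoke the First Fundamental Theorem of invariant theory for $\GL_n$: setting $V := k^{\beta(v_0)}$, the representation
\[
\bigoplus_{i=1}^l \Hom(k^{\beta(v_i)}, V) \,\oplus\, \Hom(V, k^{\beta(w)})
\]
is isomorphic to $V^{\oplus \sum_i \beta(v_i)} \oplus (V^*)^{\oplus \beta(w)}$, and Weyl's theorem says its $\GL(V)$-invariants are generated by the natural $V^* \otimes V \to k$ contractions, which are precisely the matrix entries of the products $V(b)\,V(a_i)$. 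Therefore $f$ can be written as a polynomial in these entries together with the coordinates on the arrows of $Q$ not incident to $v_0$, i.e., $f = \pi^*(g)$ for some $g \in k[\Rep(\overline{Q},\overline{\beta})]$. Since $\pi$ is surjective, $g$ is uniquely determined, and the weight-preservation established in the first step forces $g \in \SI(\overline{Q},\overline{\beta})_{\overline{\sigma}}$.

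The main obstacle is the FFT step: one must ensure that no hidden $\GL(\beta(v_0))$-invariant on $\Rep(Q,\beta)$ escapes the ring generated by the composed matrix entries $V(b)V(a_i)$. This is exactly what Weyl's first fundamental theorem provides in the symmetric $\GL(V)$-situation above. The dimension hypothesis $\beta(v_0) \geq \beta(w)$ plays no role in the FFT itself; it is used only to guarantee that $\pi$ is surjective, so that the pullback preimage $g$ of $f$ is a genuine (not merely generic) function on $\Rep(\overline{Q},\overline{\beta})$.
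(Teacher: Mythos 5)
Your proof is correct and follows essentially the same route as the paper's: the path-composition morphism $\pi$, surjectivity from $\beta(v_0)\geq\beta(w)$ giving injectivity of $\pi^*$, and the First Fundamental Theorem for $\GL(\beta(v_0))$ identifying the image of $\pi^*$ with the $\GL(\beta(v_0))$-invariants. The only cosmetic difference is that you deduce the weight of the preimage $g$ from uniqueness plus $\GL(\overline\beta)$-equivariance, whereas the paper cites linear reductivity of $\GL(\overline\beta)$; both are valid ways to finish.
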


\begin{proof}
Consider the reduction map
$$
\pi:\Rep(Q,\beta)\to \Rep(\overline Q,\overline\beta),
$$
defined by taking compositions of linear maps along the new arrows
of $\overline Q$. As $\beta(v_0) \geq \beta(w)$, we know that
$\pi$ is a surjective morphism and hence the induced comorphism
$\pi^{\star}$ is injective. Using the First Fundamental Theorem
for $\GL(\beta(v_0))$ (see for example \cite{DK}), we obtain
$$\pi^{\star}(k[\Rep(\overline
Q,\overline\beta)])=k[\Rep(Q,\beta)]^{\GL(\beta(v_0))}.$$ Since
$\pi^{\star}$ is a $\GL(\overline\beta)$-equivariant surjective
linear map, 
$\GL(\overline\beta)$ is linearly reductive, and $\sigma(v_0)=0,$
we have that $\pi^{\star}$ induces a surjective map from
$\SI(\overline Q,\overline\beta)_{\overline\sigma}$ onto
$\SI(Q,\beta)_{\sigma}$. (Note that at this point we need the
assumption that the base field is of characteristic zero.) So,
$\pi^{\star}$ defines an isomorphism from $\SI(\overline
Q,\overline\beta)_{\overline\sigma}$ into $\SI(Q,\beta)_{\sigma}$
and we are done.
\end{proof}

\begin{remark} Note that the lemma above remains true when we reverse the orientation of the arrows $a_i$ and
$b$.
\end{remark}

Keeping the same notations as above, we have:

\begin{proposition}\label{tool-shrink} If $Q$ satisfies property $(S)$ then so does $\overline
Q$.
\end{proposition}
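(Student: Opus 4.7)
The plan is to reduce saturation of orbit semigroups on $\overline Q$ to saturation on $Q$ by lifting representations along the reduction map $\pi:\Rep(Q,\beta)\to \Rep(\overline Q,\overline\beta)$ from Lemma \ref{reductionlemma.swo}. Fix an arbitrary $\overline W\in \Rep(\overline Q,\overline\beta)$; we wish to show $S(\overline W)_{\overline Q}$ is saturated. First I would construct a canonical lift $W\in \Rep(Q,\beta)$ with $\pi(W)=\overline W$ as follows: set $\beta(x)=\overline\beta(x)$ for all $x\in \overline Q_0$ and $\beta(v_0)=\overline\beta(w)$ (so automatically $\beta(v_0)\geq \beta(w)$, which is the hypothesis needed for the shrinking lemma); declare $W(b)=\Id_{k^{\overline\beta(w)}}$, $W(a_i)=\overline W(ba_i)$ for $1\leq i\leq l$, and $W(c)=\overline W(c)$ on every other arrow $c$. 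Then $\pi(W)(ba_i)=W(b)W(a_i)=\overline W(ba_i)$ and $\pi(W)$ agrees with $\overline W$ on all remaining arrows, so $\pi(W)=\overline W$.

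Next I would set up the weight correspondence. Given any $\overline\sigma\in \ZZ^{\overline Q_0}$, define its lift $\sigma\in \ZZ^{Q_0}$ by $\sigma(v_0)=0$ and $\sigma|_{\overline Q}=\overline\sigma$. Since $\sigma(v_0)=n\sigma(v_0)=0$ and $\beta(v_0)\geq \beta(w)$, Lemma \ref{reductionlemma.swo} gives $\pi$-pullback isomorphisms
$$
\pi^{\star}:\SI(\overline Q,\overline\beta)_{\overline\sigma}\xrightarrow{\;\sim\;} \SI(Q,\beta)_{\sigma},\qquad \pi^{\star}:\SI(\overline Q,\overline\beta)_{n\overline\sigma}\xrightarrow{\;\sim\;} \SI(Q,\beta)_{n\sigma}.
$$
For any $f$ in either source, $\pi^{\star}(f)(W)=f(\pi(W))=f(\overline W)$. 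Thus non-vanishing on $W$ and on $\overline W$ are equivalent, so I obtain the equivalences
$$
n\overline\sigma\in S(\overline W)_{\overline Q}\iff n\sigma \in S(W)_Q,\qquad \overline\sigma\in S(\overline W)_{\overline Q}\iff \sigma\in S(W)_Q.
$$

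Finally, to conclude, suppose $n\overline\sigma\in S(\overline W)_{\overline Q}$ for some positive integer $n$. By the first equivalence, $n\sigma\in S(W)_Q$. Since $Q$ satisfies property $(S)$, the semigroup $S(W)_Q$ is saturated, so $\sigma\in S(W)_Q$. By the second equivalence, $\overline\sigma\in S(\overline W)_{\overline Q}$, which is precisely saturation of $S(\overline W)_{\overline Q}$. Since $\overline W$ was arbitrary, $\overline Q$ satisfies property $(S)$.

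The only step with any real content is checking that a single lift $W$ of $\overline W$ can be produced that simultaneously satisfies $\pi(W)=\overline W$ and $\beta(v_0)\geq \beta(w)$, and this is handled by the explicit construction with $W(b)=\Id$ above; everything else is a formal transfer of non-vanishing along the $\pi$-pullback isomorphism supplied by Lemma \ref{reductionlemma.swo}.
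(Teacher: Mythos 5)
Your argument is correct and follows the route the paper clearly intends (the paper states this proposition without proof, treating it as an immediate consequence of the shrinking lemma). Lifting $\overline W$ by setting $\beta(v_0)=\overline\beta(w)$ and $W(b)=\Id$, pairing it with the weight lift $\sigma(v_0)=0$, and transferring non-vanishing through the $\pi^\star$ isomorphism of Lemma~\ref{reductionlemma.swo} is exactly the right mechanism, and the saturation argument closes cleanly because both $\sigma$ and $n\sigma$ vanish at $v_0$.
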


\subsection{Exceptional sequences} A dimension vector $\beta$ is
called a \emph{real Schur} root if there exists a representation
$W \in \Rep(Q,\beta)$ such that $\End_Q(W) \simeq k$ and
$\Ext_{Q}^1(W,W)=0$ (we call such a representation
\emph{exceptional}). Note that if $\beta$ is a real Schur root
then there exists a unique, up to isomorphism, exceptional
$\beta$-dimensional representation.

For $\alpha$ and $\beta$ two dimension vectors, consider the
generic $\ext$ and $\hom$:
$$\ext_Q(\alpha,\beta)=\min \{\dim_k \Ext^1_Q(V,W) \mid (V,W)\in \Rep(Q,\alpha)\times \Rep(Q,\beta)\},$$
and
$$
\hom_Q(\alpha,\beta)=\min \{\dim_k \Hom_Q(V,W) \mid (V,W)\in
\Rep(Q,\alpha)\times \Rep(Q,\beta)\}.
$$
Given two dimension vectors $\alpha$ and $\beta$, we write $\alpha
\perp \beta$ provided that
$\ext_Q(\alpha,\beta)=\hom_Q(\alpha,\beta)=0$. If $W$ is a representation, we define ${}^\perp W$ to be the full
subcategory of $\Rep(Q)$ consisting of all representations $V$
such that $V \perp W$.

\begin{definition} We say that $(\varepsilon_1,\dots,\varepsilon_r)$ is an \emph{exceptional
sequence} if
\begin{enumerate}
\renewcommand{\theenumi}{\arabic{enumi}}

\item $\varepsilon_i$ are real Schur roots;

\item $\varepsilon_i \perp \varepsilon_j$ for all $1 \leq i<j \leq
l$.

\end{enumerate}
\end{definition}

Following \cite{DW2}, a sequence
$(\varepsilon_1,\dots,\varepsilon_r)$ is called a \emph{quiver
exceptional sequence} if it is exceptional and $\langle
\varepsilon_j,\varepsilon_i \rangle \leq 0$ for all $1 \leq i<j
\leq l$.

A sequence $(E_1, \dots, E_r)$ of exceptional representations is
said to be a (quiver) exceptional sequence if the sequence of
their dimension vectors $(\underline{d}_{E_1},
\dots,\underline{d}_{E_r})$ is a (quiver) exceptional sequence.

Now, let $\varepsilon=(\varepsilon_1,\dots,\varepsilon_r)$ be a
quiver exceptional sequence and let $E_i \in
\Rep(Q,\varepsilon_i)$ be exceptional representations. Construct a
new quiver $Q(\varepsilon)$ with vertex set $\{1,\dots,r\}$ and
$-\langle \varepsilon_j,\varepsilon_i \rangle$ arrows from $j$ to
$i$. Define $\mathcal{C}(\varepsilon)$ to be the smallest full
subcategory of $\Rep(Q)$ which contains $E_1, \dots, E_r$ and is
closed under extensions, kernels of epimorphisms, and cokernels of
monomorphisms.

For the remaining of this section, we assume that $r \leq N-1$,
where $N$ is the number of vertices of $Q$. We recall a very
useful result from \cite[Section 2.7]{DW2} in a form that is
convenient for us:
\begin{proposition}\label{excep-prop}\cite{DW2} The category $\mathcal{C}(\varepsilon)$ is naturally equivalent
to $\Rep(Q(\varepsilon))$ with $E_1, \dots, E_r$ being the simple
objects of $\mathcal{C}(\varepsilon)$. Furthermore, the inverse
functor from $\Rep(Q(\varepsilon))$ to $\mathcal{C}(\varepsilon)$
is a full exact embedding into $\Rep(Q)$.
\end{proposition}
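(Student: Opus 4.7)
The plan is to construct an exact embedding $F : \Rep(Q(\varepsilon)) \hookrightarrow \Rep(Q)$ whose essential image is $\mathcal{C}(\varepsilon)$ and which sends the simple representation $S_i$ at vertex $i$ of $Q(\varepsilon)$ to the exceptional representation $E_i$. Together with identifying $E_1, \dots, E_r$ as the simple objects of $\mathcal{C}(\varepsilon)$, this yields the claimed natural equivalence, and the final clause about a full exact embedding into $\Rep(Q)$ is a direct translation of the properties of $F$.

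First I would assemble the Hom/Ext data among the $E_i$. Exceptionality gives $\End_Q(E_i) = k$ and $\Ext^1_Q(E_i, E_i) = 0$, while the exceptional sequence property gives $\Hom_Q(E_i, E_j) = \Ext^1_Q(E_i, E_j) = 0$ for $i<j$. To handle the remaining direction, I would argue that $\Hom_Q(E_j, E_i) = 0$ for $i<j$: any non-zero morphism $E_j \to E_i$ would have its image and kernel in $\mathcal{C}(\varepsilon)$ (which is closed under kernels of epimorphisms and cokernels of monomorphisms), producing a proper non-zero subobject of one of the $E_k$ inside $\mathcal{C}(\varepsilon)$ and contradicting that each $E_k$ is a brick. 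The Euler formula then combines with the quiver exceptional inequality $\langle \varepsilon_j,\varepsilon_i\rangle \leq 0$ to yield $\dim_k \Ext^1_Q(E_j, E_i) = -\langle \varepsilon_j, \varepsilon_i\rangle$, exactly the number of arrows $j \to i$ in $Q(\varepsilon)$.

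Next I would construct $F$ inductively using the acyclicity of $Q(\varepsilon)$, which is itself a consequence of the quiver exceptional condition. For an arbitrary $V \in \Rep(Q(\varepsilon))$, pick a sink $i$ of $Q(\varepsilon)$ and write $V$ as an extension of $S_i^{\oplus V(i)}$ by the restriction $V'$ of $V$ to the smaller quiver obtained by deleting $i$; the extension class lives in $\Ext^1_{Q(\varepsilon)}(S_i^{\oplus V(i)}, V')$, which via the preceding identification corresponds canonically to a class in $\Ext^1_Q(E_i^{\oplus V(i)}, F(V'))$. Define $F(V)$ to be the resulting extension in $\Rep(Q)$. Functoriality on morphisms follows from the naturality of these identifications, full faithfulness is established by induction on $\dim V$ using the five-lemma on the long exact sequences of $\Hom_Q$ and $\Hom_{Q(\varepsilon)}$, and exactness of $F$ is immediate from the construction.

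The main obstacle I expect is verifying that the essential image of $F$ equals $\mathcal{C}(\varepsilon)$. The inclusion $\operatorname{Im}(F) \subseteq \mathcal{C}(\varepsilon)$ is clear because $F(V)$ is built from iterated extensions of the $E_i$, and $\mathcal{C}(\varepsilon)$ is closed under extensions by definition. The reverse inclusion is more delicate: one must show that every object of $\mathcal{C}(\varepsilon)$ admits a finite filtration whose composition factors lie among $E_1, \dots, E_r$. I would prove this by noetherian induction, using closure of $\mathcal{C}(\varepsilon)$ under kernels of epimorphisms and cokernels of monomorphisms to peel off one simple factor at a time, and the vanishing results from the first step to guarantee that this process is well-defined and terminates. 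Once both inclusions are in hand, the $E_i$ are exactly the simple objects of $\mathcal{C}(\varepsilon)$ and the equivalence is complete.
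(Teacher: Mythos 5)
Your proposal is a genuinely different route from the paper's: you try to build the embedding functor $F$ by hand, peeling off sinks of $Q(\varepsilon)$ and glueing extensions, whereas the paper embeds $\varepsilon$ into a complete exceptional sequence $(E_1,\dots,E_N)$, identifies $\mathcal{C}(\varepsilon) = {}^\perp E_{r+1}\cap\cdots\cap{}^\perp E_N$ via Crawley-Boevey, and then invokes Schofield's perpendicular-category theorem (\cite{S2}, Thm.\ 2.3) to produce the equivalence with $\Rep(Q')$ and the full exact embedding, afterward identifying $Q'$ with $Q(\varepsilon)$. Your approach is more elementary in spirit but carries a heavier verification burden, and the two load-bearing steps where you deviate from the paper are exactly where the gaps occur.

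The serious gap is your argument that $\Hom_Q(E_j,E_i)=0$ for $i<j$. You reason that a nonzero $E_j\to E_i$ would produce a proper nonzero subobject of some $E_k$ inside $\mathcal{C}(\varepsilon)$, ``contradicting that each $E_k$ is a brick.'' This does not work for two reasons. First, $\mathcal{C}(\varepsilon)$ is closed under kernels of epimorphisms and cokernels of monomorphisms, but a nonzero non-iso morphism $E_j\to E_i$ is in general neither, so there is no reason its image or kernel lands in $\mathcal{C}(\varepsilon)$; trying to force this is circular, since you would first need $\mathcal{C}(\varepsilon)$ to be abelian with the $E_k$ simple, which is what you are proving. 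Second, even granting the image were in $\mathcal{C}(\varepsilon)$, ``brick'' means $\End=k$ and says nothing about proper subobjects; what would be contradicted is \emph{simplicity} in $\mathcal{C}(\varepsilon)$, again the thing under proof. Notice also that an exceptional pair with $\Hom(E_i,E_j)=\Ext(E_i,E_j)=0$ can very well have $\Hom(E_j,E_i)\neq 0$; the vanishing genuinely requires the extra ``quiver exceptional'' inequality $\langle\varepsilon_j,\varepsilon_i\rangle\leq 0$, which your argument never actually uses at this step. The paper gets the vanishing from Schofield's dichotomy (\cite{S1}, Thm.\ 4.1): since $\ext_Q(\varepsilon_i,\varepsilon_j)=0$, one of $\hom_Q(\varepsilon_j,\varepsilon_i)$, $\ext_Q(\varepsilon_j,\varepsilon_i)$ vanishes, and in either case $\langle\varepsilon_j,\varepsilon_i\rangle\leq 0$ forces $\hom_Q(\varepsilon_j,\varepsilon_i)=0$, hence $\Hom_Q(E_j,E_i)=0$ because $E_j,E_i$ sit in dense orbits. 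You need this input, or something equivalent.

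Two secondary points. The inductive construction of $F$ asserts that the extension class in $\Ext^1_{Q(\varepsilon)}(S_i^{\oplus V(i)},V')$ ``corresponds canonically'' to one in $\Ext^1_Q(E_i^{\oplus V(i)},F(V'))$, but this identification is only given on simples; extending it to all of $V'$ requires an argument (d\'evissage on the filtration of $V'$, using hereditariness on both sides so that no $\Ext^2$ obstructions appear), which you should at least flag. And for the essential image, identifying $\mathcal{C}(\varepsilon)$ with the objects filtered by $E_1,\dots,E_r$ is itself nontrivial: it is not immediate that the class of filtered objects is closed under kernels of epimorphisms and cokernels of monomorphisms, which is needed for it to contain $\mathcal{C}(\varepsilon)$. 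The paper sidesteps all of this by characterizing $\mathcal{C}(\varepsilon)$ as a perpendicular category, which comes with the equivalence and the full exact embedding for free.
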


\begin{proof} As the $\varepsilon_i$ are Schur roots and
$\ext_{Q}(\varepsilon_i,\varepsilon_j)=0,
\forall i <j$, we know that either
$\hom_{Q}(\varepsilon_j,\varepsilon_i)=0$ or
$\ext_{Q}(\varepsilon_j,\varepsilon_i)=0$ by \cite[Theorem
4.1]{S1}. This fact combined with $\langle
\varepsilon_j,\varepsilon_i \rangle \leq 0$ implies
$\hom_{Q}(\varepsilon_j,\varepsilon_i)=0$. But this is equivalent
to $\Hom_{Q}(E_j,E_i)=0, \forall i <j$ as the representations
$E_i$ have dense orbits in $\Rep(Q,\varepsilon_i)$. From
\cite[Lemma 2.36]{DW2}, it follows that $E_1, \dots, E_r$ are
precisely the simple objects of $\mathcal{C}(\varepsilon)$.

Using \cite[Theorem 2.3]{S2} (see also \cite{DW2}), we can
(naturally) extend $E_1, \dots, E_r$ to an exceptional sequence of
representations of the form $E_1, \dots, E_r,E_{r+1},\dots, E_N$.
Now, we have the equality
$$\mathcal{C}(\varepsilon)={}^\perp E_{r+1} \cap \dots \cap {}^\perp E_N,$$
by \cite[Lemma 4]{CB2}. Applying \cite[Theorem 2.3]{S2} again, we
deduce that $\mathcal{C}(\varepsilon)$ is naturally equivalent to
the category of representations of a quiver $Q'$ without oriented
cycles and $r$ vertices. Furthermore, the inverse functor from
$\Rep(Q')$ to $\mathcal{C}(\varepsilon)$ is a full exact embedding
into $\Rep(Q)$. But now, it is clear that $Q'$ must be precisely
$Q(\varepsilon)$.
\end{proof}

\begin{remark} 
Let us point out that for a complete quiver exceptional sequence $\varepsilon$, the corresponding exceptional
representations $E_i$ are precisely the simple
representations of $Q$ as it was shown by Ringel \cite[Theorem
3]{R2}.
\end{remark}

From Proposition \ref{orbit-semi-def-prop} and Proposition
\ref{excep-prop}, we deduce:

\begin{proposition}\label{tool-exceptional} Let
$\varepsilon=(\varepsilon_1,\dots,\varepsilon_r)$ be a quiver
exceptional sequence for $Q$. If property $(S)$ fails for
$Q(\varepsilon)$ then the same is true for $Q$.
\end{proposition}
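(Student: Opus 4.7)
My plan is to take a representation $\widetilde W$ of $Q(\varepsilon)$ witnessing the failure of property $(S)$ and use the full exact embedding $F\colon \Rep(Q(\varepsilon))\to \mathcal{C}(\varepsilon)\hookrightarrow \Rep(Q)$ of Proposition~\ref{excep-prop} to construct $W=F(\widetilde W)$ witnessing the failure of $(S)$ for $Q$. Concretely, let $\widetilde\sigma=\langle\widetilde\alpha,\cdot\rangle_{Q(\varepsilon)}$ and $n\ge 2$ satisfy $n\widetilde\sigma\in S(\widetilde W)_{Q(\varepsilon)}$ but $\widetilde\sigma\notin S(\widetilde W)_{Q(\varepsilon)}$. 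Set $\alpha=\sum_{i=1}^{r}\widetilde\alpha_i\varepsilon_i\in \ZZ^{Q_0}$ and $\sigma=\langle\alpha,\cdot\rangle_Q$. Because $F$ is exact, it sends a representation of dimension vector $\widetilde\alpha$ (respectively $n\widetilde\alpha$) to one of dimension vector $\alpha$ (respectively $n\alpha$). I will establish that $n\sigma\in S(W)_Q$ yet $\sigma\notin S(W)_Q$.

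The first assertion is straightforward. By Proposition~\ref{orbit-semi-def-prop} applied to $n\widetilde\sigma\in S(\widetilde W)_{Q(\varepsilon)}$, there is $\widetilde V\in\Rep(Q(\varepsilon),n\widetilde\alpha)$ with $\widetilde V\perp\widetilde W$. Since $F$ is a full exact embedding, it preserves both $\Hom$ and $\Ext^1$, so $F(\widetilde V)\in\Rep(Q,n\alpha)$ satisfies $F(\widetilde V)\perp W$, and another application of Proposition~\ref{orbit-semi-def-prop} yields $n\sigma\in S(W)_Q$.

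For $\sigma\notin S(W)_Q$, I argue by contradiction, assuming there exists $V\in\Rep(Q,\alpha)$ with $V\perp W$. The key step is to refine $V$ so that it also lies in $\mathcal{C}(\varepsilon)$. I extend $(\varepsilon_1,\dots,\varepsilon_r)$ to a complete exceptional sequence $(\varepsilon_1,\dots,\varepsilon_N)$ with exceptional representatives $(E_1,\dots,E_N)$ via \cite[Theorem~2.3]{S2}, so that $\mathcal{C}(\varepsilon)={}^\perp E_{r+1}\cap\dots\cap{}^\perp E_N$. The explicit representation $V_0=\bigoplus_{i=1}^{r}E_i^{\widetilde\alpha_i}$ has dimension vector $\alpha$ and lies in $\mathcal{C}(\varepsilon)$, hence $V_0\perp E_j$ for every $j>r$. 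Upper semicontinuity of $\dim_k\Hom_Q(\cdot,E_j)$ and $\dim_k\Ext^1_Q(\cdot,E_j)$ on $\Rep(Q,\alpha)$ then shows that the locus of representations belonging to $\mathcal{C}(\varepsilon)$ is a non-empty open subset; intersecting it with the non-empty open $\{V\perp W\}$ in the irreducible variety $\Rep(Q,\alpha)$ produces some $V\in \mathcal{C}(\varepsilon)$ of dimension $\alpha$ with $V\perp W$.

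Finally, set $\widetilde V=F^{-1}(V)\in\Rep(Q(\varepsilon))$. The linear independence of $\varepsilon_1,\dots,\varepsilon_r$ in $\ZZ^{Q_0}$ (they extend to a basis via the complete exceptional sequence) forces the $Q(\varepsilon)$-dimension vector of $\widetilde V$ to equal $\widetilde\alpha$, and fullness and exactness of $F$ give $\widetilde V\perp\widetilde W$. Proposition~\ref{orbit-semi-def-prop} then implies $\widetilde\sigma\in S(\widetilde W)_{Q(\varepsilon)}$, contradicting our choice of $\widetilde\sigma$. I expect the main obstacle to be the genericity step, namely proving that a generic $V\in\Rep(Q,\alpha)$ already belongs to $\mathcal{C}(\varepsilon)$; this is resolved cleanly by exhibiting the concrete $V_0$ above and invoking upper semicontinuity of $\Hom$ and $\Ext^1$ dimensions.
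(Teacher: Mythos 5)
Your argument is correct and supplies exactly the content the paper leaves implicit in saying that Proposition~\ref{tool-exceptional} "follows from Proposition~\ref{orbit-semi-def-prop} and Proposition~\ref{excep-prop}": namely, an $\alpha$-dimensional representation $V\perp W$ produced by Proposition~\ref{orbit-semi-def-prop} need not lie in $\mathcal{C}(\varepsilon)$, and you remove this obstacle by exhibiting $V_0=\bigoplus_i E_i^{\oplus\widetilde\alpha_i}$, using semicontinuity of $\dim\Hom_Q(\cdot,E_j)$, $\dim\Ext^1_Q(\cdot,E_j)$ together with $\mathcal{C}(\varepsilon)={}^\perp E_{r+1}\cap\cdots\cap{}^\perp E_N$ to see that the $\mathcal{C}(\varepsilon)$-locus is open and non-empty in the irreducible variety $\Rep(Q,\alpha)$, and then intersecting with the non-empty open locus $\{V\perp W\}$. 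The transfer of $\perp$ in both directions via the full, faithful, exact embedding into the extension-closed subcategory $\mathcal{C}(\varepsilon)$, and the transfer of dimension vectors via linear independence of $\varepsilon_1,\dots,\varepsilon_r$, are both used correctly.

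One step should be made explicit. You need $\widetilde\alpha$ (hence $\alpha$) to be a genuine dimension vector, both to invoke Proposition~\ref{orbit-semi-def-prop} as an equivalence and to form $V_0$. This is guaranteed by the Spanning Theorem applied to $n\widetilde\alpha$ once $\widetilde W$ is sincere, but you never reduce to that case. As in Case~2 of the proof of Lemma~\ref{sat-orbit-semigr-lemma}, one should first pass to the full subquiver of $Q(\varepsilon)$ on $\supp(\widetilde W)$ --- which is again of the form $Q(\varepsilon')$ for the subsequence $\varepsilon'$ of $\varepsilon$ indexed by $\supp(\widetilde W)$, still a quiver exceptional sequence --- and replace $\widetilde W$ by its restriction; saturation of $S(\widetilde W)$ is unaffected because it is the inverse image of $S(\widetilde W')$ under the restriction map on weights. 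With this reduction inserted at the start, your proof is complete.
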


\section{Wild quivers}\label{wild-sec} In this section, we show that for every wild quiver $Q$ without
oriented cycles there is a representation $W$ such that $S(W)$ is
not saturated. Our strategy is very similar to the one from
\cite[Section 6]{SW1}. More precisely, we use reflection functors
and the shrinking method to reduce the list of wild quivers to
just seven quivers. Then, we use exceptional sequences to further
reduce this list to the generalized Kronecker quiver with three
arrows.

\begin{example}[Generalized Kronecker quivers]\label{main-cex} Let $\theta(3)$ be the generalized Kronecker
quiver with two vertices and three arrows:
$$
\xy     (0,0)*{\cdot}="a";
        (10,0)*{\cdot}="b";
        {\ar@3{->} "a";"b" };
\endxy
$$

Label the three arrows by $a,b$, and $c$. Now, choose $W \in
\Rep(\theta(3),(3,3))$ so that $W(a),W(b)$, and $W(c)$ are
linearly independent skew-symmetric $3 \times 3$ matrices. If
$\sigma=(1,-1)$ then it is easy to see that $\sigma(\underline
d_{W'})\leq 0$ for every subrepresentation $W'$ of $W$ (i.e., $W$
is $\sigma$-semi-stable). By King semi-stability criterion
\cite{K}, this is means that $n \sigma \in S(W)$ for some integer
$n \geq 1$.

On the other hand, we claim that $\sigma \notin S(W)$. Indeed, the
weight space $\SI(\theta(3),(3,3))_{\sigma}$ is spanned by the
coefficients of the functional determinant:
$$
W \to \det(t_1W(a)+t_2W(b)+t_3W(c))
$$
as a polynomial in the variables $t_1,t_2$, and $t_3$. But $3
\times 3$ skew symmetric matrices have zero determinant, and
hence, $f(W)=0$ for every semi-invariant $f$ of weight $\sigma$.
This shows that $S(W)$ is not saturated.
\end{example}

The following combinatorial result is essentially taken from
\cite[Proposition 49]{SW1} (see also \cite[Lemma 2.1, pp.
253]{ASS}):

\begin{proposition}\label{7-quivers} Let $Q$ be a finite, connected, wild quiver
without oriented cycles. Then $Q$ contains a subquiver which can
be reduced to one of the following seven quivers by applying
reflection transformations and shrinking paths to arrows:
\begin{enumerate}
\renewcommand{\theenumi}{\alph{enumi}}

\item

$$
\xy     (0,0)*{\cdot}="a";
        (10,0)*{\cdot}="b";
        {\ar@3{->} "a";"b" };
\endxy
$$

\item
$$
\xy     (0,0)*{\cdot}="a";
        (10,0)*{\cdot}="b";
        (20,0)*{\cdot}="c";
        {\ar@2{->} "a";"b"};
        {\ar@{->} "b";"c"};
\endxy
$$
\item
$$
\xy     (-10, 0)*{\cdot}="b";
        (0,0)*{\cdot}="c";
        (10,5)*{\cdot}="d";
        (-10,5)*{\cdot}="e";
        (-10,-5)*{\cdot}="f";
        (10,-5)*{\cdot}="g";
        {\ar@{->} "b";"c"};
        {\ar@{<-} "c";"d"};
        {\ar@{<-} "c";"e"};
        {\ar@{<-} "c";"f"};
        {\ar@{<-} "c";"g"};
\endxy
$$
\item
$$
\xy     (0,0)*{\cdot}="a";
        (10,5)*{\cdot}="b";
        (10,0)*{\cdot}="c";
        (10,-5)*{\cdot}="d";
        (-10,0)*{\cdot}="e";
        (-20,0)*{\cdot}="f";
        {\ar@{->} "a";"b"};
        {\ar@{->} "a";"c"};
        {\ar@{->} "a";"d"};
        {\ar@{->} "e";"a"};
        {\ar@{->} "f";"e"};
\endxy
$$
\item
$$
\xy (0, 0)*{\cdot}="a";
        (-10, 5)*{\cdot}="b";
        (-20,5)*{\cdot}="c";
        (-30,5)*{\cdot}="d";
        (-20,0)*{\cdot}="e";
        (-10,0)*{\cdot}="f";
        (-20,-5)*{\cdot}="h";
        (-10,-5)*{\cdot}="i";
        {\ar@{<-} "a";"b"};
        {\ar@{<-} "b";"c"};
        {\ar@{<-} "c";"d"};
        {\ar@{->} "f";"a"};
        {\ar@{->} "e";"f"};
        {\ar@{<-} "a";"i"};
        {\ar@{<-} "i";"h"};
    \endxy
$$
\item
$$
\xy (0, 0)*{\cdot}="a";
        (-10, 5)*{\cdot}="b";
        (-20,5)*{\cdot}="c";
        (-30,5)*{\cdot}="d";
        (-40,5)*{\cdot}="h";
        (-20,0)*{\cdot}="e";
        (-10,0)*{\cdot}="f";
        (-30,0)*{\cdot}="i";
        (-10,-5)*{\cdot}="j";
        {\ar@{->} "b";"a"};
        {\ar@{->} "c";"b"};
        {\ar@{->} "d";"c"};
        {\ar@{->} "h";"d"};
        {\ar@{->} "f";"a"};
        {\ar@{->} "e";"f"};
        {\ar@{->} "i";"e"};
        {\ar@{->} "j";"a"};
    \endxy
$$
\item
$$
\xy (0, 0)*{\cdot}="a";
        (-10, 5)*{\cdot}="b";
        (-20,5)*{\cdot}="c";
        (-30,5)*{\cdot}="d";
        (-40,5)*{\cdot}="h";
        (-50,5)*{\cdot}="g";
        (-60,5)*{\cdot}="k";
        (-20,0)*{\cdot}="e";
        (-10,0)*{\cdot}="f";
        (-10,-5)*{\cdot}="i";
        {\ar@{<-} "a";"b"};
        {\ar@{<-} "b";"c"};
        {\ar@{<-} "c";"d"};
        {\ar@{<-} "d";"h"};
        {\ar@{<-} "h";"g"};
        {\ar@{->} "k";"g"};
        {\ar@{->} "f";"a"};
        {\ar@{->} "e";"f"};
        {\ar@{<-} "a";"i"};
    \endxy
$$
\end{enumerate}
\end{proposition}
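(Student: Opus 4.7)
The plan is to follow the strategy of \cite[Proposition 49]{SW1}, where an essentially identical statement appears, via a combinatorial case analysis on the underlying graph of $Q$.

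First I would recall that a connected quiver without oriented cycles is wild if and only if its underlying graph $\overline{Q}$ is neither Dynkin nor Euclidean; equivalently, $\overline{Q}$ properly contains one of the Euclidean diagrams $\widetilde{\mathbb{A}}_n$ $(n\geq 1)$, $\widetilde{\mathbb{D}}_n$ $(n\geq 4)$, or $\widetilde{\mathbb{E}}_{6,7,8}$ as a (possibly non-full) subgraph. This is the standard Gabriel/Kac characterization through the Tits form.

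Next I would observe that the two reduction operations at our disposal --- reflecting at a sink or source, and shrinking a degree-two vertex along a length-two path --- have a transparent effect on $\overline{Q}$: reflections merely reorient arrows locally at a vertex, while shrinking contracts an interior vertex of a path. By performing a sequence of reflections at sinks and sources (always available via an admissible vertex ordering on an acyclic quiver), I may reorient $Q$ so that any prescribed vertex becomes a sink or source exactly when I wish to reflect or shrink there. Consequently the problem reduces to a purely graph-theoretic question about $\overline{Q}$.

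The core of the argument is then a case analysis organized by which Euclidean subdiagram of $\overline{Q}$ is being properly extended. If the extension has multiplicity at least three on some edge, one directly extracts a subquiver reducing to (a). If $\overline{Q}$ contains $\widetilde{\mathbb{A}}_n$ together with an extra attached vertex or edge, shrinking the cycle and then adjusting by reflections lands in (a) or (b). For proper extensions of $\widetilde{\mathbb{D}}_n$ --- either by an additional leg at the branch vertex or by lengthening one of the arms --- shrinking each linear arm produces either the star with five short legs (c), or the branching configuration with an additional long tail (d). For proper extensions of $\widetilde{\mathbb{E}}_6$, $\widetilde{\mathbb{E}}_7$, or $\widetilde{\mathbb{E}}_8$ (obtained by lengthening the longest arm or by attaching one extra vertex), shrinking the long arms produces the target quivers (e), (f), and (g), respectively. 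After the shrinking phase has put the underlying graph into the correct shape, a final sequence of reflections realizes the specific orientations drawn in (a)--(g).

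The main obstacle will be the bookkeeping in the shrinking step: one must verify that at each iteration the chosen vertex $v_0$ is genuinely of degree two along a path whose orientation (after any preliminary reflections) is consistent with Lemma~\ref{reductionlemma.swo}, and that shrinking there preserves a subquiver still witnessing the Euclidean extension. Since this verification is already carried out in detail in \cite[Proposition 49]{SW1}, and a closely related enumeration is made in \cite[Lemma 2.1, p.~253]{ASS}, my plan is to invoke their case-by-case analysis to conclude.
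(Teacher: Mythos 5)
Your plan matches the paper's proof, which consists entirely of the citation ``Similar to \cite[Proposition 49]{SW1}'': the intended argument is exactly the combinatorial case analysis by which Euclidean subdiagram is properly extended, using that source/sink reflections freely reorient an acyclic quiver and that shrinking contracts degree-two (or, more generally, out-degree-one) vertices. The one point to make explicit rather than bury in ``invoke their case-by-case analysis'' is the paper's own remark that quivers (a) and (b) differ from the first two entries of \cite[Proposition 49]{SW1}, since here paths are shrunk to arrows (stopping at $\widetilde{\mathbb{A}}_1$) rather than arrows to loops and non-full subquivers are permitted; your sketch of the $\widetilde{\mathbb{A}}$ case already yields the correct (a) and (b), so this is a matter of stating the adjustment, not of a missing idea.
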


\begin{proof} Similar to \cite[Proposition 49]{SW1}.
\end{proof}

\begin{remark} Note that the first two quivers on our list differ
than the first two from \cite[Proposition 49]{SW1}. This is
because we are shrinking paths to arrows instead of shrinking
arrows to loops (or identity) as in the aforementioned paper.
Also, our subquiver $Q'$ is not necessarily a full subquiver.
\end{remark}

\begin{proposition}\label{ex-7-quivers-prop} Let $Q$ be one of the quivers from the list
obtained in Proposition \ref{7-quivers}. Then, there exists a
representation $W \in \Rep(Q,\beta)$ such that $S(W)$ is not
saturated.
\end{proposition}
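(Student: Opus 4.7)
The plan is to reduce each of the seven quivers in Proposition \ref{7-quivers} to the generalized Kronecker quiver $\theta(3)$ by means of a quiver exceptional sequence, so that the failure of property $(S)$ exhibited in Example \ref{main-cex} can be transferred back. Since quiver (a) is literally $\theta(3)$, Example \ref{main-cex} handles it directly, and I only need to deal with quivers (b)--(g).

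For each of the remaining six quivers $Q$, the goal is to exhibit a quiver exceptional sequence $\varepsilon=(\varepsilon_1,\varepsilon_2)$ of real Schur roots of $Q$ with $\varepsilon_1\perp\varepsilon_2$ and $\langle\varepsilon_2,\varepsilon_1\rangle=-3$, so that the associated quiver $Q(\varepsilon)$ is precisely $\theta(3)$. Once such a sequence is produced, Proposition \ref{excep-prop} embeds $\Rep(\theta(3))$ as a full exact subcategory of $\Rep(Q)$ via the simples $E_1,E_2$ of dimension $\varepsilon_1,\varepsilon_2$, and then Proposition \ref{tool-exceptional} shows that the failure of property $(S)$ for $\theta(3)$ forces its failure for $Q$. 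Concretely, one would take the representation $W\in \Rep(\theta(3),(3,3))$ from Example \ref{main-cex}, realise it inside $\mathcal{C}(\varepsilon)\subseteq \Rep(Q)$ as a direct sum of three copies of $E_1$ and three copies of $E_2$ with the three generating arrows of $\theta(3)$ specialised to the three linearly independent skew-symmetric matrices, and verify via Proposition \ref{orbit-semi-def-prop} (together with the embedding) that the resulting orbit semigroup inherits the non-saturation witnessed by the weight $\sigma=(1,-1)$.

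For each of (b)--(g), the candidates for $\varepsilon_1,\varepsilon_2$ are natural and can be read off the diagram: one typically chooses $\varepsilon_1$ to be supported on the ``tail branches'' extending from a central vertex $v$ (encoding a long chain of simple reflections that pushes the simple at $v$ through the branch), and $\varepsilon_2$ to be supported on the remaining branches meeting $v$; the three extra arrows of $\theta(3)$ come either from three arrows at $v$ in (c) and (d), from the combination of one arm plus branching at $v$ in (e) and (f), or from the long $\widetilde{\mathbb{E}}$-type arm plus a double branch in (g). One then verifies the three required conditions for each candidate: real Schurness via a direct computation of $\langle\varepsilon_i,\varepsilon_i\rangle=1$ combined with a BGP-style construction of the exceptional representation; the orthogonality $\ext(\varepsilon_1,\varepsilon_2)=\hom(\varepsilon_1,\varepsilon_2)=0$ by explicit semicontinuity arguments or by Schofield's criterion from \cite[Theorem 4.1]{S1}; and the Euler pairing $\langle\varepsilon_2,\varepsilon_1\rangle=-3$ by the formula \eqref{Euler-prod}.

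The main obstacle is the bookkeeping for quivers (f) and (g), which have long branches and multiple trivalent vertices, so one has to choose the supports of $\varepsilon_1$ and $\varepsilon_2$ with some care to make all three numerical conditions hold simultaneously. In practice, this is where the flexibility granted by Propositions \ref{tool-reflection} and \ref{tool-shrink} is useful: one may first apply reflection functors to bring the quiver into a more symmetric form in which standard exceptional sequences (for instance, dimension vectors of preprojective or preinjective indecomposables over suitable full subquivers of type $\widetilde{\mathbb{D}}$ or $\widetilde{\mathbb{E}}$) are visible, and then transport the resulting sequence back. Once these explicit choices are written down case by case, the rest is a mechanical verification.
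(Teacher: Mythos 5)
Your strategy is exactly the one the paper uses: exhibit for each of the quivers (b)--(g) a quiver exceptional sequence $\varepsilon$ with $Q(\varepsilon)=\theta(3)$ (directly or after iterating), and then invoke Proposition~\ref{excep-prop} and Proposition~\ref{tool-exceptional} to transport the non-saturation found in Example~\ref{main-cex} back to $Q$. So at the level of framework your proposal is correct.

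The gap is that you never actually produce the exceptional sequences. Their explicit construction is the entire technical content of this proposition, not a ``mechanical verification'': the dimension vectors involved are far from obvious (for instance, for quiver (g) the paper takes
$\varepsilon_1$ with entries $0,3,5,7,9,11$ along the long arm and $5,9,13$ and $7$ on the remaining branches), and for each choice one must check that the $\varepsilon_i$ are real Schur roots, that $\ext_Q(\varepsilon_1,\varepsilon_2)=\hom_Q(\varepsilon_1,\varepsilon_2)=0$, and that $\langle\varepsilon_2,\varepsilon_1\rangle=-3$. The heuristics you offer (``one typically chooses $\varepsilon_1$ supported on tail branches\dots'') do not determine the roots, and you explicitly defer the bookkeeping for (f) and (g), which is precisely where the argument must be carried out. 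A second, smaller, discrepancy: you commit to finding a \emph{length-two} quiver exceptional sequence landing on $\theta(3)$ in every case, while the paper instead uses length-three sequences for (d) and (e), producing intermediate three-vertex wild quivers (of type (b) or its variant $(b')$) and then composing with the known embedding of $\theta(3)$ into those. A length-two sequence does exist in principle (one can compose the two embeddings), but the two-stage organization is what makes the explicit roots manageable; insisting on a single two-term sequence from the outset is likely to make the search harder, not easier. In short: right idea, but the part of the proof that requires real work is left undone.
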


\begin{proof} We have seen in Example \ref{main-cex} that our proposition is true for the quiver of type $(a)$.
Next, we use exceptional sequences to embed this generalized
Kronecker quiver into each of the remaining six quivers.

For the quiver
$$
\xy     (0,0)*{\cdot}="a";
        (10,0)*{\cdot}="b";
        (20,0)*{\cdot}="c";
        {\ar@2{->} "a";"b"};
        {\ar@{->} "b";"c"};
\endxy
$$
of type $(b)$, we take $\varepsilon_1=(0,0,1)$ and
$\varepsilon_2=(2,3,0)$.

For the quiver
$$
\xy     (-10, 0)*{\cdot}="b";
        (0,0)*{\cdot}="c";
        (10,5)*{\cdot}="d";
        (-10,5)*{\cdot}="e";
        (-10,-5)*{\cdot}="f";
        (10,-5)*{\cdot}="g";
        {\ar@{->} "b";"c"};
        {\ar@{<-} "c";"d"};
        {\ar@{<-} "c";"e"};
        {\ar@{<-} "c";"f"};
        {\ar@{<-} "c";"g"};
\endxy
$$
of type $(c)$, we take $\varepsilon_1=
\begin{matrix}
1& &1\\
1&3 & \\
1& & 0\\
\end{matrix}
$, \hspace{10pt}$\varepsilon_2=
\begin{matrix}
0& &0\\
0&0& \\
0& &1\\
\end{matrix}
$.

For the quiver
$$
\xy     (0,0)*{\cdot}="a";
        (10,5)*{\cdot}="b";
        (10,0)*{\cdot}="c";
        (10,-5)*{\cdot}="d";
        (-10,0)*{\cdot}="e";
        (-20,0)*{\cdot}="f";
        {\ar@{->} "a";"b"};
        {\ar@{->} "a";"c"};
        {\ar@{->} "a";"d"};
        {\ar@{->} "e";"a"};
        {\ar@{->} "f";"e"};

\endxy
$$
of type $(d)$, take $\varepsilon_1=
\begin{matrix}
& & &0\\
0&0 &0 &0\\
& & & 1\\
\end{matrix}
$,\hspace{10pt} $\varepsilon_2=
\begin{matrix}
& & &1\\
0&1 &2 &1\\
& & & 0\\
\end{matrix}
$, \hspace{10pt}$\varepsilon_3=
\begin{matrix}
&&&0\\
1&0&0&0\\
&&&0
\end{matrix}
$.
Note that for this quiver, the generalized Kronecker quiver embeds via the quiver of type $(b)$.

For the quiver
$$
\xy (0, 0)*{\cdot}="a";
        (-10, 5)*{\cdot}="b";
        (-20,5)*{\cdot}="c";
        (-30,5)*{\cdot}="d";
        (-20,0)*{\cdot}="e";
        (-10,0)*{\cdot}="f";
        (-20,-5)*{\cdot}="h";
        (-10,-5)*{\cdot}="i";
        {\ar@{<-} "a";"b"};
        {\ar@{<-} "b";"c"};
        {\ar@{<-} "c";"d"};
        {\ar@{->} "f";"a"};
        {\ar@{->} "e";"f"};
        {\ar@{<-} "a";"i"};
        {\ar@{<-} "i";"h"};
    \endxy
$$
of type $(e)$, we take $\varepsilon_1=
\begin{matrix}
0&1&2&\\
&1&2&3\\
&0&2& \\
\end{matrix}
$,\hspace{10pt} $\varepsilon_2=
\begin{matrix}
0&0&0&\\
&0&0&0\\
&1&0& \\
\end{matrix}
$, \hspace{10pt}$\varepsilon_3=
\begin{matrix}
1&0&0&\\
&0&0&0\\
&0&0&
\end{matrix}
$.
Note that for this quiver, first we get an embedding of the quiver $
\xy     (0,0)*{\cdot}="a";
        (10,0)*{\cdot}="b";
        (20,0)*{\cdot}="c";
        {\ar@2{->} "a";"b"};
        {\ar@{<-} "b";"c"};
\endxy
$ (call it of type $(b')$) and then we embed the generalized Kronecker
quiver into the quiver of type $(b')$ by using the sequence $((2,3,0),(0,0,1))$.

For the quiver
$$
\xy (0, 0)*{\cdot}="a";
        (-10, 5)*{\cdot}="b";
        (-20,5)*{\cdot}="c";
        (-30,5)*{\cdot}="d";
        (-40,5)*{\cdot}="h";
        (-20,0)*{\cdot}="e";
        (-10,0)*{\cdot}="f";
        (-30,0)*{\cdot}="i";
        (-10,-5)*{\cdot}="j";
        {\ar@{->} "b";"a"};
        {\ar@{->} "c";"b"};
        {\ar@{->} "d";"c"};
        {\ar@{->} "h";"d"};
        {\ar@{->} "f";"a"};
        {\ar@{->} "e";"f"};
        {\ar@{->} "i";"e"};
        {\ar@{->} "j";"a"};
    \endxy
$$
of type $(f)$, we take $\varepsilon_1=
\begin{matrix}
0&3&5&7&\\
&3&5&7&9\\
& & &5& \\
\end{matrix}
$,\hspace{10pt} $\varepsilon_2=
\begin{matrix}
1&0&0&0&\\
&0&0&0&0\\
& & &0& \\
\end{matrix}
$.

Finally, for the quiver
$$
\xy (0, 0)*{\cdot}="a";
        (-10, 5)*{\cdot}="b";
        (-20,5)*{\cdot}="c";
        (-30,5)*{\cdot}="d";
        (-40,5)*{\cdot}="h";
        (-50,5)*{\cdot}="g";
        (-60,5)*{\cdot}="k";
        (-20,0)*{\cdot}="e";
        (-10,0)*{\cdot}="f";
        (-10,-5)*{\cdot}="i";
        {\ar@{<-} "a";"b"};
        {\ar@{<-} "b";"c"};
        {\ar@{<-} "c";"d"};
        {\ar@{<-} "d";"h"};
        {\ar@{<-} "h";"g"};
        {\ar@{->} "k";"g"};
        {\ar@{->} "f";"a"};
        {\ar@{->} "e";"f"};
        {\ar@{<-} "a";"i"};
    \endxy
$$
of type $(g)$, we take $\varepsilon_1=
\begin{matrix}
0&3&5&7&9&11&\\
&&&&5&9&13\\
&&&&&7& \\
\end{matrix}
$,\hspace{10pt} $\varepsilon_2=
\begin{matrix}
1&0&0&0&0&0&\\
&&&&0&0&0\\
&&&&&0& \\
\end{matrix}
$.

Now, the proof follows from Proposition \ref{tool-exceptional}.
\end{proof}

\begin{proof}[Proof of Theorem \ref{main-thm}] The implication
$"\Longrightarrow"$ is proved in Proposition
\ref{tamequivers-prop}. The other implication follows from
Proposition \ref{ex-7-quivers-prop} and Proposition
\ref{tool-reflection}, Proposition \ref{tool-shrink}, Proposition
\ref{tool-exceptional}.
\end{proof}

\begin{remark}We would like to end this section with some comments about the possibility of extending our theorem to other classes of algebras. First of all, it is obvious how to define orbit semigroups for finite dimensional modules over finite dimensional algebras. Furthermore, some of the main tools used in the proof of Theorem \ref{main-thm}, such as Derksen-Weyman
spanning theorem and Kac's canonical decomposition, are available
for finite dimensional algebras as well. It is also useful to
know if Schofield's theorem \cite[Theorem 3.8]{S1} extends to
other classes of algebras. This is clearly the case for regular
dimension vectors for concealed-canonical algebras (for more
details, see \cite[pp. 382]{DL2}). This opens the possibility of
proving our theorem for this class of algebras. Finally, let us
mention that for canonical algebras, the implication
$"\Longleftarrow"$ of Theorem \ref{main-thm} follows from its
validity for quivers. Indeed, from \cite{R3} we know that a canonical algebra $\Lambda$ with underlying quiver $\Delta$ is tame if and only if $\Delta \setminus \{\infty \}$ is a Dynkin or Euclidean quiver. (Here, $\infty$ is the unique sink of $\Delta$.) Now, we can see that by working with representations of $\Lambda$ which are zero at the sink $\infty$, the implication $"\Longleftarrow"$ of Theorem \ref{main-thm} for canonical algebras follows.
\end{remark}

\section{The thin sincere case} In this section we look into the
case when the dimension vector $\textbf{1}$ is thin sincere, i.e.,
$\textbf{1}(x)=1, \forall x \in Q_0$. Let us fix some notation first. For an affine $G$-variety $X$,
where $G$ is a linear algebraic group, and $\sigma \in X^{*}(G)$ a
rational character of $G$, we set
$$
\SI(X,G)_{\sigma}:=\{f \in k[X] \mid g \cdot f=\sigma(g)f, \forall
g \in G\}.
$$
For a given representation $W \in \Rep(Q, \textbf{1})$, we have
$$
S(W)=\{\sigma \in \ZZ^{Q_0} \mid \exists f \in
\SI(\overline{\GL(\textbf{1})W},\GL(\textbf{1}))_{\sigma} \text{~such that~}
f(W)\neq 0 \}.
$$
Consider the weight space decomposition:
$$k[\overline{\GL(\textbf{1})W}]=k[\overline{\GL(\textbf{1})W}]^{\SL(\textbf{1})}=\bigoplus
\SI(\overline{\GL(\textbf{1})W},\GL(\textbf{1}))_{\sigma},$$ where the sum
is over all weights $\sigma \in S(W)$. As $\GL(\textbf{1})$ acts with
a dense orbit on the closure of the orbit of $W$, we must have
$\dim_k \SI(\overline{\GL(\textbf{1})W},\GL(\textbf{1}))_{\sigma} \leq 1$, and
so,
$$k[\overline{\GL(\textbf{1})W}]=k[S(W)].$$
From toric geometry, we deduce that if $S(W)$ is saturated then $\overline{\GL(\textbf{1})W}$ is normal. We should point out that this last observation is not true for other dimension vectors as the following example, due to Zwara, shows.

\begin{example} Consider the Kronecker
quiver $\theta(2)$:
$$
\xy     (0,0)*{1}="a";
        (10,0)*{2}="b";
        {\ar@2{->} "a";"b" };
\endxy
$$
with arrows labeled by $a$ and $b$. Let $W \in \Rep(\theta(2),(3,3))$ be the representation given by $W(a)=
\left(
\begin{matrix}
0& 0 & 0\\
1& 0 & 0\\
0& 1 & 0
\end{matrix}
\right)$ and $W(b)= \left(
\begin{matrix}
1& 0 & 0\\
0& 0 & 0\\
0& 0 & 1
\end{matrix}
\right)$. It was proved by Zwara \cite{Zw2} that $\overline{\GL(\alpha)W}$ is not normal. On the other hand, Proposition \ref{tamequivers-prop} tells us that $S(W)$ is saturated. In fact, in this example it is not difficult to see that $S(W)=\{(0,0)\}$. Indeed, it was first proved by Happel \cite{Hap} (see also \cite{Ko}) that the algebra of semi-invariants $\SI(\theta(2),(3,3))$ is (a polynomial algebra) generated by the
coefficients of the functional determinant:
$$
W \to \det(t_1W(a)+t_2W(b))
$$
as a polynomial in the variables $t_1,t_2$. But, for our choice of $W(a)$ and $W(b)$, $\det(t_1W(a)+t_2W(b))=0$ and hence $S(W)=\{(0,0)\}$.
\end{example}

In \cite[Theorem 1.3]{BZ2}, it was proved that $\overline{\GL(\textbf{1})W}$ is normal
when $W \in \Rep(Q,\textbf{1})$ is just the identity along the
arrows. We are going to see that this is the case for any
representation $W$ by showing that $S(W)$ is saturated. In the thin sincere case, it is rather easy to
write down a $k$-basis for each weight space of semi-invariants.
Consider the boundary map $I=I_Q:\RR^{Q_1} \to \RR^{Q_0}$ of $Q$;
this is the function which assigns to every
$\lambda=(\lambda(a))_{a \in Q_1}$, the vector $(I(\lambda)_x)_{x
\in Q_0}$, where
$$
I(\lambda)_{x}:=\sum_{\buildrel {a \in Q_1} \over
{ta=x}}\lambda(a)- \sum_{\buildrel{a \in Q_1} \over
{ha=x}}\lambda(a),
$$
for every vertex $x \in Q_0$. Let us record the following simple
lemma:

\begin{lemma} Keep notation as above. Let $\sigma \in \ZZ^{Q_0}$
be a weight. Then
$$
\dim_k \SI (Q, \emph{\textbf{1}})_{\sigma}=|I^{-1}(\sigma) \bigcap
\ZZ^{Q_1}_{\geq 0}|.
$$
\end{lemma}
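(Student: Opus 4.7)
The proof is essentially a direct weight-space computation for a torus action, so it is short. I outline the steps.

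First I would observe that, since $\beta=\mathbf{1}$, the representation space is simply affine space
$$\Rep(Q,\mathbf{1})=\bigoplus_{a\in Q_1}\Hom_k(k,k)\cong k^{Q_1},$$
with polynomial coordinate ring $k[x_a\mid a\in Q_1]$, and the group $\GL(\mathbf{1})=(k^*)^{Q_0}$ is an algebraic torus. This is the key structural simplification: the whole problem reduces to decomposing the polynomial ring into torus weight spaces.

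Next I would compute the weight of each coordinate function under the action recalled in Section \ref{intro-sec}. Using $(g\cdot W)(a)=g(ha)W(a)g(ta)^{-1}$ and the convention $(g\cdot f)(W)=f(g^{-1}\cdot W)$, one finds
$$g\cdot x_a = g(ta)\,g(ha)^{-1}\,x_a,$$
so $x_a$ is a $\GL(\mathbf{1})$-eigenvector of weight $e_{ta}-e_{ha}$. Consequently every monomial $x^{\lambda}:=\prod_{a\in Q_1}x_a^{\lambda(a)}$ with $\lambda\in\ZZ^{Q_1}_{\geq 0}$ is a semi-invariant, and a direct check at each vertex $x\in Q_0$ shows that its weight is
$$\sum_{a\in Q_1}\lambda(a)(e_{ta}-e_{ha})=I(\lambda).$$

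Finally, because $\GL(\mathbf{1})$ is a torus acting rationally on $k[\Rep(Q,\mathbf{1})]$, the coordinate ring splits as the direct sum of its weight spaces, and each weight space is spanned by the monomials $x^{\lambda}$ having that weight (the monomials form a simultaneous eigenbasis). Therefore
$$\SI(Q,\mathbf{1})_{\sigma}=\bigoplus_{\substack{\lambda\in\ZZ^{Q_1}_{\geq 0}\\ I(\lambda)=\sigma}}k\cdot x^{\lambda},$$
and taking dimensions yields $\dim_k\SI(Q,\mathbf{1})_{\sigma}=|I^{-1}(\sigma)\cap\ZZ^{Q_1}_{\geq 0}|$, as claimed.

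There is no real obstacle here; the only mild care needed is in getting the sign conventions right so that the weight of $x^{\lambda}$ comes out equal to $I(\lambda)$ and not $-I(\lambda)$. Once the action is written out correctly, the statement follows from the standard weight-space decomposition for torus representations.
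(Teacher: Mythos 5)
Your argument is correct and is essentially the paper's own proof: both reduce to the observation that $\Rep(Q,\mathbf{1})$ is affine space with a torus action, so $k[\Rep(Q,\mathbf{1})]$ decomposes into one-dimensional weight spaces indexed by monomials $x^\lambda$ with $\lambda\in\ZZ^{Q_1}_{\geq 0}$, and the weight of $x^\lambda$ is $I(\lambda)$. The paper phrases the same computation in terms of symmetric algebras of $V(ta)\otimes V(ha)^*$ and determinant characters rather than coordinate functions directly, but the content is identical.
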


\begin{proof} For convenience, denote $V(x)=k, \forall x \in Q_0$.
If $V$ is a vector space, we denote by $\det_V^l$, the $l^{th}$
power of the determinant representation of $\GL(V)$; the symmetric
algebra of $V$ is denoted by $S(V)$. It is easy to see that
$$
\begin{aligned}
k[\Rep(Q,\textbf{1})]=&\bigotimes_{a \in Q_1}S(V(ta) \otimes
V(ha)^*)\\
&=\bigotimes_{a \in Q_1} \bigoplus_{\lambda(a)\in \ZZ_{\geq 0}}
{\det}_{V(ta)}^{\lambda(a)} \otimes {\det}_{V(ha)}^{-\lambda(a)}\\
&=\bigoplus_{\lambda \in \ZZ_{\geq 0}^{Q_1}}\bigotimes_{x \in Q_0}
{\det}_{V(x)}^{I(\lambda)_x}
\end{aligned}
$$
The proof now follows.
\end{proof}

For every $\lambda \in \ZZ^{Q_1}_{\geq 0}$, define
$$
\begin{aligned}
f_{\lambda}: &\Rep(Q, \textbf{1}) \to k\\
&(t(a))_{a \in Q_1} \mapsto \prod_{a \in Q_1} t(a)^{\lambda(a)}.
\end{aligned}
$$
Now it is clear that $\{f_{\lambda} \mid \lambda \in
I^{-1}(\sigma) \bigcap \ZZ^{Q_1}_{\geq 0}\}$ is a $k$-basis of
$\SI(Q, \textbf{1})_{\sigma}$.

\begin{proposition} For every $W \in \Rep(Q, \emph{\textbf{1}})$, the
semigroup $S(W)$ is saturated.
\end{proposition}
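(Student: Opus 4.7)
The plan is to give an explicit combinatorial description of $S(W)$ and reduce saturation to the total unimodularity of the incidence matrix of a directed graph.

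First I would unpack what $S(W)$ looks like in the thin sincere case. By the basis description, every semi-invariant of weight $\sigma$ is a linear combination of the monomials $f_{\lambda}$ with $\lambda \in I^{-1}(\sigma) \cap \ZZ^{Q_1}_{\geq 0}$, and $f_{\lambda}(W) = \prod_{a \in Q_1} W(a)^{\lambda(a)}$ is nonzero precisely when $\lambda$ is supported on the arrow set $A(W) := \{a \in Q_1 \mid W(a) \neq 0\}$. Hence
\[
S(W) = \bigl\{I_Q(\lambda) \;\big|\; \lambda \in \ZZ^{Q_1}_{\geq 0},\ \supp(\lambda) \subseteq A(W)\bigr\} = M(\ZZ^{A(W)}_{\geq 0}),
\]
where $M$ is the submatrix of the boundary map whose columns are indexed by $A(W)$.

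The key observation is that $M$ is exactly the incidence matrix of the directed graph with vertex set $Q_0$ and arrow set $A(W)$ (entry $+1$ at the tail, $-1$ at the head, $0$ elsewhere, which is well defined since $Q$ has no oriented cycles and in particular no loops). It is a classical result in combinatorial optimization that such an incidence matrix is \emph{totally unimodular}: every square submatrix has determinant in $\{-1, 0, 1\}$.

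Now suppose $n\sigma \in S(W)$ for some positive integer $n$, so $M\mu = n\sigma$ for some $\mu \in \ZZ^{A(W)}_{\geq 0}$. Dividing by $n$ shows that the rational polyhedron
\[
P := \bigl\{\lambda \in \RR^{A(W)}_{\geq 0} \;\big|\; M\lambda = \sigma\bigr\}
\]
is nonempty (it contains $\mu/n$). Since $M$ is totally unimodular and $\sigma \in \ZZ^{Q_0}$, the polyhedron $P$ has only integer vertices (this is the standard consequence of total unimodularity: Cramer's rule yields integer solutions for nondegenerate square subsystems). Therefore $P$ contains a point $\lambda \in \ZZ^{A(W)}_{\geq 0}$ with $M\lambda = \sigma$, which means $\sigma \in S(W)$.

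I do not expect any substantive obstacle here: once one writes $S(W)$ as the image of the nonnegative integer orthant under the restricted incidence matrix, the total unimodularity of the incidence matrix of a directed graph, together with the standard integrality theorem for TU systems, immediately gives saturation. The only point requiring a little care is making sure that a nonzero basis element $f_\lambda$ on $W$ forces $\lambda$ to be supported on $A(W)$, which is clear from the monomial shape of $f_\lambda$.
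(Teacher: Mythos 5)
Your proof is correct, but it takes a genuinely different route from the paper. The paper's proof first reduces to the case $W(a)\neq 0$ for all $a$ (by discarding the arrows on which $W$ vanishes, which does not change the relevant boundary maps), observes that then $S(W)=\Sigma(Q,\textbf{1})$, and quotes the Derksen--Weyman Saturation Theorem as a black box. You instead describe $S(W)$ explicitly as the image $M(\ZZ^{A(W)}_{\geq 0})$ of the nonnegative integer orthant under the restricted incidence matrix, note that $M$ is the vertex--arrow incidence matrix of a directed graph and hence totally unimodular, and invoke the standard integrality theorem for TU systems. This is more elementary and more self-contained: it avoids all of Schofield's theory of general representations (on which the DW Saturation Theorem rests) and makes the ``thin sincere $=$ flow polytope / toric'' picture completely explicit; as a byproduct it also reproves $\Sigma(Q,\textbf{1})$ is saturated without the general theorem. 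Two small points you should make explicit rather than leave implicit: (i) the polyhedron $P=\{\lambda\in\RR^{A(W)}_{\geq 0}\mid M\lambda=\sigma\}$ contains no lines because it lies in the nonnegative orthant, so being nonempty it does have a vertex; and (ii) the integrality of the vertices of $\{x\geq 0\mid Ax=b\}$ for $A$ totally unimodular and $b$ integral is the equality-form consequence of the Hoffman--Kruskal theorem (stacking $Ax\le b$, $-Ax\le -b$, $-x\le 0$ preserves total unimodularity). With those remarks added, the argument is a clean, valid alternative to the paper's.
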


\begin{proof} Write $W=(t(a))_{a \in Q_1}$, where $t(a) \in k, \forall a \in Q_1$. Note that for a weight $\sigma \in \ZZ^{Q_0}$, we have
$$
\sigma \in S(W) \Longleftrightarrow \exists \lambda \in
I^{-1}(\sigma) \bigcap \ZZ^{Q_1}_{\geq 0} \text{~such that~}
\lambda(a)=0 \text{~whenever~} t(a)=0.
$$

To check that $S(W)$ is saturated, we can clearly assume that
$t(a) \neq 0, \forall a \in Q_1$. Under this assumption, we deduce
that $S(W)=\Sigma(Q, \textbf{1})$ which is known to be saturated
by Theorem \ref{DW-sat}.
\end{proof}


\begin{thebibliography}{10}

\bibitem{ASS}
I.~Assem, D.~Simson, and A.~Skowro{\'n}ski.
\newblock {\em Elements of the representation theory of associative algebras.
  {V}ol. 1: {T}echniques of representation theory}, volume~65 of {\em London
  Mathematical Society Student Texts}.
\newblock Cambridge University Press, Cambridge, 2006.

\bibitem{APR}
Maurice Auslander, Mar{\'{\i}}a~In{\'e}s Platzeck, and Idun Reiten.
\newblock Coxeter functors without diagrams.
\newblock {\em Trans. Amer. Math. Soc.}, 250:1--46, 1979.

\bibitem{Benson}
D.~J. Benson.
\newblock {\em Representations and cohomology. {I}: {B}asic representation
  theory of finite groups and associative algebras}, volume~30 of {\em
  Cambridge Studies in Advanced Mathematics}.
\newblock Cambridge University Press, Cambridge, second edition, 1998.

\bibitem{BeHa}
F.~Berchtold and J.~Hausen.
\newblock \textrm{GIT}-equivalence beyond the ample cone.
\newblock {\em Michigan. Math. J.}, 54(3):483--516, 2006.

\bibitem{BGP}
I.M. Berenstein, I.N. Gelfand, and V.A. Ponomarev.
\newblock Coxeter {F}unctors and {G}abriel {T}heorem.
\newblock {\em Russ. Math. Surveys}, 28:17--32, 1973.

\bibitem{BZ2}
G.~Bobi\'nski and G.~Zwara.
\newblock Normality of orbit closures for directing modules over tame algebras.
\newblock {\em J. Algebra}, 298(1):120--133, 2006.

\bibitem{CW}
C.~Chang and J.~Weyman.
\newblock Representations of quivers with free module of covariants.
\newblock {\em J. Pure Appl. Algebra}, 192(1-3):69--94, 2004.

\bibitem{CC5}
C.~Chindris.
\newblock On \textrm{GIT}-fans for quivers.
\newblock Preprint avilable at ar\textrm{X}iv:0805.1440v1 [math.RT].

\bibitem{CB2}
W.~Crawley-Boevey.
\newblock Exceptional sequences of representations of quivers.
\newblock In {\em Representations of algebras (Ottawa, ON, 1992)}, volume~14 of
  {\em CMS Conf. Proc.}, pages 117--124. Amer. Math. Soc., Providence, RI,
  1993.

\bibitem{DK}
H.~Derksen and G.~Kemper.
\newblock {\em Computational invariant theory}.
\newblock Invariant Theory and Algebraic Transformation Groups, I.
  Springer-Verlag, Berlin, 2002.
\newblock , Encyclopaedia of Mathematical Sciences, 130.

\bibitem{DW1}
H.~Derksen and J.~Weyman.
\newblock Semi-invariants of quivers and saturation for
  \textrm{L}ittlewood-\textrm{R}ichardson coefficients.
\newblock {\em J. Amer. Math. Soc.}, 13(3):467--479, 2000.

\bibitem{DW2}
H.~Derksen and J.~Weyman.
\newblock The combinatorics of quiver representations.
\newblock Preprint, arXiv.math.RT/0608288, 2006.

\bibitem{DL}
M.~Domokos and H.~Lenzing.
\newblock Invariant theory of canonical algebras.
\newblock {\em J. Algebra}, 228(2):738--762, 2000.

\bibitem{DL2}
M.~Domokos and H.~Lenzing.
\newblock Moduli spaces for representations of concealed-canonical algebras.
\newblock {\em J. Algebra}, 251(1):371--394, 2002.

\bibitem{DF}
P.~Donovan and M.~R. Freislich.
\newblock {\em The representation theory of finite graphs and associated
  algebras}.
\newblock Number~5 in Carleton Mathematical Lecture Notes. Carleton University,
  Ottawa, Ont., 1973.

\bibitem{Ga}
P.~Gabriel.
\newblock Unzerlegbare \textrm{D}arstellungen. \textrm{I}. (\textrm{G}erman.
  \textrm{E}nglish summary).
\newblock {\em Manuscripta Math.}, 6(2):71--103 ; correction, ibid. 6 (1972),
  309., 1972.

\bibitem{Hap}
D.~Happel.
\newblock Relative invariants of quivers of tame type.
\newblock {\em J. Algebra}, 86(2):315--335, 1984.

\bibitem{BenH}
B.~J. Howard.
\newblock Matroids and geometric invariant theory of torus actions on flag
  spaces.
\newblock {\em J. Algebra}, 312(1):527--541, 2007.

\bibitem{Kac2}
V.G. Kac.
\newblock Infinite root systems, representations of graphs and invariant
  theory.
\newblock {\em Invent. Math}, 56(1):57--92, 1980.

\bibitem{Kac}
V.G. Kac.
\newblock Infinite root systems, representations of graphs and invariant theory
  \textrm{II}.
\newblock {\em J. Algebra}, 78(1):141--162, 1982.

\bibitem{K}
A.D. King.
\newblock Moduli of representations of finite-dimensional algebras.
\newblock {\em Quart. J. Math. Oxford Ser.(2)}, 45(180):515--530, 1994.

\bibitem{Ko}
K~Koike.
\newblock Relative invariants of the polynomial rings over type {$A\sb
  r,\tilde{A}\sb r$} quivers.
\newblock {\em Adv. Math.}, 86(2):235--262, 1991.

\bibitem{Naz}
L.A. Nazarova.
\newblock Representations of quivers of infinite type. (\textrm{R}ussian).
\newblock {\em Izv. Akad. Nauk SSSR Ser. Mat.}, 37:752--791, 1973.

\bibitem{R3}
C.~Ringel.
\newblock {\em Tame algebras and integral quadratic forms}, volume 1099 of {\em
  Lecture Notes in Mathematics}.
\newblock Springer-Verlag, Berlin, 1984.

\bibitem{R}
C.M. Ringel.
\newblock Representations of $\textit{K}$-species and bimodules.
\newblock {\em J. Algebra}, 41(2):269--302, 1976.

\bibitem{R2}
C.M. Ringel.
\newblock The braid group action on the set of exceptional sequences of a
  hereditary {A}rtin algebra.
\newblock In {\em Abelian group theory and related topics (Oberwolfach, 1993)},
  volume 171 of {\em Contemp. Math.}, pages 339--352. Amer. Math. Soc.,
  Providence, RI, 1994.

\bibitem{S2}
A.~Schofield.
\newblock Semi-invariants of quivers.
\newblock {\em J. London Math. Soc. (2)}, 43(3):385--395, 1991.

\bibitem{S1}
A.~Schofield.
\newblock General representations of quivers.
\newblock {\em Proc. London Math. Soc. (3)}, 65(1):46--64, 1992.

\bibitem{SVB}
A.~Schofield and M.~van~den Bergh.
\newblock Semi-invariants of quivers for arbitrary dimension vectors.
\newblock {\em Indag. Math. (N.S.)}, 12(1):125--138, 2001.

\bibitem{SW2}
A.~Skowro{\'n}ski and J.~Weyman.
\newblock Semi-invariants of canonical algebras.
\newblock {\em Manuscripta Math.}, 100(3):391--403, 1999.

\bibitem{SW1}
A.~Skowro{\'n}ski and J.~Weyman.
\newblock The algebras of semi-invariants of quivers.
\newblock {\em Transform. Groups}, 5(4):361--402, 2000.

\bibitem{Zw2}
G.~Zwara.
\newblock An orbit closure for a representation of the \textrm{K}ronecker
  quiver with bad singularities.
\newblock {\em Colloq. Math.}, 97(1):81--86, 2000.

\end{thebibliography}
\end{document}